\documentclass[preprint,12pt]{elsarticle}

\usepackage{fullpage}   
\usepackage{amsmath}
\usepackage{amsfonts}
\usepackage{amssymb}
\usepackage{amsthm}
\usepackage{subcaption}\captionsetup[subfigure]{labelfont=rm}
\usepackage{graphicx}
\usepackage{color}
\usepackage{pinlabel}
\usepackage{extarrows}

\usepackage{pinlabel}
\usepackage{hyperref}

\newtheorem{theorem}{Theorem}
\newtheorem*{whitney}{Whitney's 2-Isomorphism Theorem}

\newtheorem{lemma}{Lemma}
\newtheorem{corollary}{Corollary}
\theoremstyle{definition}
\newtheorem{definition}{Definition}

\theoremstyle{remark}

\newcommand{\ba}{\setminus}

\newcommand{\B}{\mathcal{B}}
\newcommand{\F}{\mathcal{F}}
\newcommand{\mat}[1]{\mathbf{#1}}

\newcommand{\GF}{\mathrm{GF}}
\newcommand{\G}{\mathbb{G}}

\newcommand{\bH}{\mathbb{H}}

\newcommand{\calA}{\mathcal{A}}

\newcommand{\iso}{\cong}

\journal{}

\begin{document}

\begin{frontmatter}

\title{A 2-isomorphism theorem for delta-matroids}

\author[1]{Iain Moffatt}
\address[1]{Department of Mathematics,
Royal Holloway,
University of London,
Egham,
TW20~0EX,
United Kingdom}
\ead{iain.moffatt@rhul.ac.uk}

\author[2]{Jaeseong Oh}
\address[2]{Department of Mathematical Sciences,
Seoul National University,
Seoul, 
GwanAkRo 1, Gwanak-Gu,
Korea}
\ead{jaeseong\_oh@snu.ac.kr}
\begin{abstract}
 Whitney's 2-Isomorphism Theorem characterises when two graphs have  isomorphic cycle matroids. We present an analogue of this theorem for graphs embedded in surfaces by characterising when two graphs  in surface have isomorphic delta-matroids.
\end{abstract}

\begin{keyword}
Delta-matroid \sep 2-isomorphism \sep mutation \sep ribbon graph \sep Whitney flip
\MSC[2010]{05B35 \sep 05C10}
\end{keyword}

\end{frontmatter}

\section{Introduction}\label{s1}

There is a well-known  symbiotic relationship between graph theory and matroid theory, with each area informing  the other. (A good introduction to this relationship can be found in~\cite{MR1850709}). The transition between graphs and matroids is usually  through  cycle matroids. However,  care must be taken when moving between graphs and matroids since there is not a 1-1 correspondence between the sets of graphs and cycle matroids --- different graphs can give rise to the same cycle matroids. 
\emph{Whitney's 2-Isomorphism  Theorem}~\cite{whitney} (see also~\cite{MR558452,MR816055}) characterises this indeterminacy by determining when two graphs have isomorphic cycle matroids:
\begin{whitney}\label{wt}
Let $G$ and $H$ be graphs, with cycle matroids  $C(G)$ and $C(H)$. 
 Then $C(G)$ and  $C(H)$ are isomorphic matroids if and only if $G$ and $H$ are related by isomorphism, vertex identification, vertex cleaving, or Whitney twisting.
\end{whitney}
The graph moves in the theorem are described in Figure~\ref{ddwe} (their formal definitions, which are not needed here, can be found in Chapter~5 of~\cite{Oxley}). 
Graphs $G$ and $H$ that are related as in the theorem statement are said to be  \emph{2-isomorphic}.

\begin{figure}[!ht]
    \centering
     \labellist
\small\hair 2pt
\pinlabel {$G_1$} at 14 29
\pinlabel {$G_2$} at 60 29
\endlabellist  
\includegraphics[scale=1]{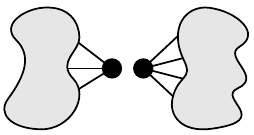}
\hspace{-5mm}
\raisebox{5mm}{
\begin{tabular}{c}
$\xrightarrow[\hspace*{1.6cm}]{\text{identification}}$ \\
$\xleftarrow[\text{cleaving}]{\hspace*{1.6cm}}$
\end{tabular}
\hspace{-5mm}
}
 \labellist
\small\hair 2pt
\pinlabel {$G_1$} at 14 29
\pinlabel {$G_2$} at 52 29
\endlabellist  
\includegraphics[scale=1]{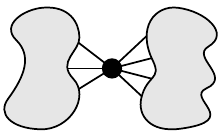}
\qquad
  \labellist
\small\hair 2pt
\pinlabel {$G_1$} at 14 29
\pinlabel {$G_2$} at 52 29
\endlabellist  
\includegraphics[scale=1]{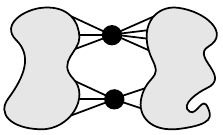}
\raisebox{6mm}{
$\xleftrightarrow[\text{twisting}]{\text{Whitney}}$ 
}
 \labellist
\small\hair 2pt
\pinlabel {$G_1$} at 14 29
\pinlabel {\raisebox{\depth}{\scalebox{1}[-1]{{$G_2$}}}} at 52 9
\endlabellist  
\includegraphics[scale=1]{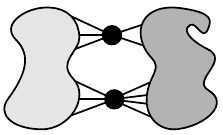}

     \caption{2-isomorphism: vertex identification, vertex cleaving and Whitney twisting}
         \label{ddwe}
\end{figure}

\medskip

The cycle matroid $C(\G)$ of a  graph embedded in a surface, $\G$, records no information about its embedding. Thus the symbiotic relationship above between matroid theory and graph theory does not extend to one between matroid theory and \emph{topological} graph theory.  For a matroidal analogue of a graph embedded in a surface, one should instead consider delta-matroids.
Delta-matroids generalise matroids and were introduced in the mid-1980s, independently, by Bouchet in~\cite{MR904585};  Chandrasekaran and Kabadi, under the name of \emph{pseudo-matroids}, in~\cite{MR959006}; and  Dress and Havel, under the name of \emph{metroids}, in~\cite{MR866162}. (Here we follow the terminology and notation of Bouchet.) 
  In \cite{CMNR1}, Chun, Moffatt, Noble and Rueckriemen proposed that a symbiotic relationship, analogous to that between graph theory and matroid theory,  
   holds between topological graph theory and delta-matroid theory. 
   This perspective has led to a number of recent advances in both areas.

 For this relationship, it is convenient (but not essential) to realise graphs embedded in surfaces as ribbon graphs (these are defined in Section~\ref{srg}).  The delta-matroid $D(\G)$ of a ribbon graph $\G$ provides a topological analogue of a cycle matroid of a graph. 
 Just as in the classical case of graphs and matroids, care must be taken when passing between ribbon graphs and delta-matroids since different ribbon graphs can have the same delta-matroid. In this paper we characterise when two ribbon graphs have the same delta-matroid. That is, we provide an analogue of Whitney's Theorem for ribbon graphs their delta-matroids:
 \begin{theorem}\label{t1}
Let $\G$ and $\bH$ be  ribbon  graphs, and let $D(\G)$ and $D(\bH)$ be their delta-matroids.
Then $D(\G)\iso D(\bH)$ if and only if $\G$ can be obtained from $\bH$ by ribbon graph isomorphism,
 vertex joins, vertex cuts, or
  mutation.
\end{theorem}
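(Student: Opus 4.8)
The plan is to prove the two directions separately; the forward direction is routine and essentially all of the difficulty lies in the converse. \emph{If $\G$ and $\bH$ are related by the four moves, then $D(\G)\iso D(\bH)$.} It suffices to check that each move fixes the delta-matroid up to isomorphism, using that $F\subseteq E(\G)$ is feasible in $D(\G)$ exactly when the spanning ribbon subgraph $\G|_{F}$ has precisely $c(\G)$ boundary components. Ribbon graph isomorphism is immediate. If $\G$ is a vertex join of $\G_{1}$ and $\G_{2}$ then, for any $F=F_{1}\cup F_{2}$, the subgraph $\G|_{F}$ is built by gluing $\G_{1}|_{F_{1}}$ to $\G_{2}|_{F_{2}}$ along one vertex disc; its boundary components are those of the two pieces with one from each merged into one, and counting shows $\G|_{F}$ has $c(\G)$ boundary components if and only if $\G_{i}|_{F_{i}}$ has $c(\G_{i})$ for $i=1,2$. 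Hence $D(\G)=D(\G_{1})\oplus D(\G_{2})$ independently of the joining vertices, so vertex joins and their inverses (vertex cuts) preserve $D$. For a mutation --- a regluing of a ribbon subgraph attached to the rest of $\G$ at (at most) two points, the ribbon-graph counterpart of a Whitney twist --- one verifies directly, by tracing boundary curves through the regluing, that the number of boundary components of $\G|_{F}$ is unchanged for every $F$, so the feasible sets do not change.

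\emph{If $D(\G)\iso D(\bH)$, then $\G$ and $\bH$ are related by the four moves.} Fix a bijection $\phi\colon E(\G)\to E(\bH)$ inducing the isomorphism. The minimum-size feasible sets of $D(\G)$ are the edge sets of the spanning forests of $\G$ (i.e.\ the lower matroid $D_{\min}(\G)$ is the cycle matroid $C(\G)$), so $\phi$ sends spanning forests of $\G$ to spanning forests of $\bH$ and induces an isomorphism $C(\G)\iso C(\bH)$; by Whitney's Theorem the underlying graphs of $\G$ and $\bH$ are $2$-isomorphic, but this says nothing yet about the embedding data, which is the real point. To reach it, fix a spanning forest $T$ of $\G$; then $\phi(T)$ is a spanning forest of $\bH$, and the partial duals $\G^{T}$ and $\bH^{\phi(T)}$ (geometric dual taken only on the edges of $T$, resp.\ $\phi(T)$) are disjoint unions of bouquets (one-vertex ribbon graphs). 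Since $D(\G^{A})$ is the twist of $D(\G)$ by $A$ --- its feasible sets are $\{F\btu A:F\text{ feasible in }D(\G)\}$ --- we still have $D(\G^{T})\iso D(\bH^{\phi(T)})$ via $\phi$. A lemma, to be established separately, asserts that the equivalence generated by the four moves is preserved by partial duality; granting it, it is enough to prove the theorem when $\G$ and $\bH$ are bouquets and, after relabelling by $\phi$, $D(\G)=D(\bH)$ on a common ground set $E$.

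For bouquets the argument runs through chord diagrams. A bouquet is encoded by a signed chord diagram, equivalently by a symmetric matrix $A_{\G}$ over $\GF(2)$ whose off-diagonal entries record interlacement of loops and whose diagonal entries record twists, and then $D(\G)=D(A_{\G})$, the delta-matroid whose feasible sets are the $F$ with $A_{\G}[F]$ nonsingular. Inspecting the feasible sets of size at most two recovers $A_{\G}$ from $D(\G)$ (the diagonal from singletons, the off-diagonal from pairs), so $D(\G)=D(\bH)$ forces $A_{\G}=A_{\bH}$: the chord diagrams of $\G$ and $\bH$ have the same signed interlacement. It remains to show that two signed chord diagrams with the same interlacement are related by mutation and isomorphism. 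In the orientable (twist-free) case this is the classical reconstruction of a chord diagram from its interlacement graph: the indeterminacy is generated by the flips associated with the splits in the split decomposition of the interlacement graph, and after undoing the partial duality each such flip is realised by a mutation or a vertex join/cut while the residual global reflection is an isomorphism. The signed case follows from the analogous reconstruction for signed chord diagrams (or by reduction to the orientable case). Assembling these identifications shows that the bouquets $\G$ and $\bH$, hence (via the lemma) the original $\G$ and $\bH$, are related by the four moves.

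The crux is this last step: matching the combinatorial indeterminacy of the chord-diagram --- equivalently, symmetric $\GF(2)$-matrix --- representation of a ribbon-graphic delta-matroid precisely to mutation and vertex join/cut, and handling non-orientability uniformly; the compatibility of partial duality with the four moves is a secondary obstacle. An alternative avoiding the reduction to bouquets is induction on $|E(\G)|$: since delta-matroid deletion and contraction realise ribbon-graph deletion and contraction, the inductive hypothesis applies to $\G\setminus e$ against $\bH\setminus\phi(e)$ and to $\G/e$ against $\bH/\phi(e)$, and then $e$ must be reinserted consistently --- where, exactly as in the proof of Whitney's Theorem, the difficulty is to synchronise the two move-sequences produced by the inductive step.
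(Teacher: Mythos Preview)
Your overall strategy matches the paper's: reduce to connected ribbon graphs via vertex joins, then to bouquets via partial duality with respect to a feasible set (your spanning forest works, as does any quasi-tree), use that normal binary delta-matroids are determined by their feasible sets of size at most two, and finally invoke the Bouchet--Cunningham characterisation of chord diagrams with the same intersection graph. The compatibility lemma you flag (that partial duality commutes with mutation) is exactly the paper's Lemma~\ref{mkdjk}, and the digression through Whitney's theorem for the underlying abstract graphs is, as you yourself observe, irrelevant --- the paper's proof does not use it at all.

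The genuine gap is the non-orientable case. Your phrase ``the signed case follows from the analogous reconstruction for signed chord diagrams (or by reduction to the orientable case)'' is where real work is hiding, and it is not automatic: there is no off-the-shelf signed version of the chord-diagram mutation theorem to cite. The paper carries out the reduction to the orientable case via \emph{partial petrials}. If $A_{\G}$ is the set of non-orientable loops of the bouquet $\G$ (equivalently, the set of size-one feasible sets of $D(\G)$), then $\G^{\tau(A_{\G})}$ is an orientable bouquet and $D(\G^{\tau(A_{\G})}) = D(\G) + A_{\G}$ by Theorem~\ref{dan}\eqref{dan4}. Any isomorphism $D(\G)\cong D(\bH)$ must identify $A_{\G}$ with $A_{\bH}$, hence induces $D(\G^{\tau(A_{\G})}) \cong D(\bH^{\tau(A_{\bH})})$, and the orientable argument then applies to these. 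To pull the conclusion back to $\G$ and $\bH$ one needs that \emph{partial petriality commutes with mutation} --- this is the other half of Lemma~\ref{mkdjk}, and it is essential. Your sketch never mentions partial petrials or loop complementation, so this mechanism is absent; without it the non-orientable case is not actually proved.
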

Vertex joins, vertex cuts and mutation are described in  Definitions~\ref{dsga} and~\ref{sas}. However, for digesting the theorem statement, the reader is likely to find that a quick look at Figures~\ref{vcj} and~\ref{padon} suffices for the moment.

The result in Theorem~\ref{t1} is likely to agree with the reader's intuition. However, from the perspective of the classical graphs case, the approach to its proof may be unexpected. In particular, Theorem~\ref{t1} is entirely independent of Whitney's 2-isomorphism Theorem.
Instead, its proof relies upon Cunningham's theory of graph decompositions \cite{MR655562}, and  Bouchet's work on circle graphs~\cite{MR918395}. 
Indeed we feel that the proof provides an illuminating example of how ribbon graph theory and delta-matroid theory synchronise by bringing together a number of independently-developed results from each area.

\section{Background}\label{s2}
In this section, we give brief overview of some relevant definitions and  properties of delta-matroids and ribbon graphs. The material presented here is standard, and a reader familiar with ribbon graphic delta-matroids may safely skip or skim this section. 
Additional background on delta-matroids can be found in~\cite{CMNR1,bcc}, and on ribbon graphs in  \cite{MR3086663}.

\subsection{Delta-matroids}\label{s21}

A \emph{set system} is a pair $D=(E, \F)$, where $E$ is a finite set, called the \emph{ground set},  and $\F$ is a collection of subsets of $E$.
The subsets of $E$ in $\F$ are called \emph{feasible sets}. 
A set system $D=(E, \F)$  is \emph{proper} if $\F$ is non-empty,  is  \emph{normal} if $\F$ contains the empty set, and is \emph{even} if every feasible set is of the same parity (i.e., the feasible sets are all of odd size or are all of even size). 

The   \emph{symmetric difference},   $X\triangle Y$, of two sets $X$ and $Y$ is $(X\cup Y) \ba (X\cap Y)$.
A set system  $D=(E, \F)$ is said to satisfy the \emph{Symmetric Exchange Axiom} if for any $X$ and $Y$ in $\F$, if there exists $u\in X\triangle Y$, then there exists $v\in X\triangle Y$ such that $X\triangle \{u, v\}$ is in $\F$.  

 A \emph{delta-matroid}  is a proper set system $D=(E, \F)$ that satisfies the  Symmetric Exchange Axiom. 
 A \emph{matroid} is a delta-matroid in which all feasible sets have the same size, in which case the feasible sets are called \emph{bases}. (This definition is equivalent to the usual basis definition of a matroid.)
 
Two delta-matroids or set systems $D_1=(E_1, \F_1)$ and $D_2=(E_2, \F_2)$ are \emph{isomorphic}, written $D_1\cong D_2$,  if there exists a bijection $\phi:E_1 \rightarrow{}E_2$ such that a subset $X$ of $E_1$ is in $\F_1$ if and only if $\phi(X)$ is in $\F_2$. 

\medskip

We will consider the operations of \emph{twisting} and \emph{loop complementation} on set systems. Twisting was introduced by Bouchet in~\cite{MR904585}, and loop complementation by  Brijder and Hoogeboom in~\cite{MR2838021}.
Let $D=(E, \F)$ be a set system, $e\in E$, and $A\subseteq E$.
The \emph{twist} of $D$ with respect to $A$, denoted by $D\ast A$, is $(E, \{A \triangle X : X\in \F\})$.

\emph{Loop complementation} of $D$ on $e$, denoted by $D+e$, is defined to be the set system $(E,\mathcal{F}')$ where
\[ \mathcal{F}'= \mathcal{F} \triangle \{ F\cup e : F\in \mathcal{F} \text{ and } e\notin F  \} .\]
If $e_1, e_2 \in E$ then $(D+e_1)+e_2 = (D+e_2)+e_1 $. This means that if $A=\{a_1, \ldots , a_n\}\subseteq E$ we can unambiguously define the {\em loop complementation} of $D$ on $A$, by $D+A:= D+a_1+\cdots + a_n$.

If $D$ is a delta-matroid, then $D\ast A$ is always a delta-matroid, but, in general, $D+A$ need not be. However, for the class of delta-matroids we are interested in here (the delta-matroids of ribbon graphs), it always is (this follows from Item~\eqref{dan4} of Theorem~\ref{dan} below).

\medskip

We shall make use of properties of binary delta-matroids.  
Let  $\mat{A}$ be a symmetric matrix over some field, whose rows and columns are labelled (in the same order) by a set $E$. 
For $X\subseteq E$,  let $\mat{A}[X]$ denote the principal submatrix of $\mat{A}$ given by the rows and columns indexed by $X$. 
Define a collection $\F$ of subsets of $E$ by  taking $X \in \F$   if and only if  $\mat{A}[X]$  is non-singular. 
By convention,  $\mat{A}[\emptyset]$ is considered to be non-singular. 
Bouchet proved in~\cite{ab88} that $D(\mat{A}):= (E,\F)$ is a delta-matroid.  Note that $D(\mat{A})$ is necessarily normal.
 
A normal delta-matroid $D$ is said to be \emph{binary} if $D\iso D(\mat{A})$ for some symmetric matrix $\mat{A}$  over $\GF(2)$.
A delta-matroid  is  \emph{binary} if it is a twist of a normal binary delta-matroid.

An important fact for us here is that  normal binary delta-matroids are completely determined by their feasible sets of size at most 2, a result due to  Bouchet and Duchamp~\cite{BD91}:
\begin{theorem}\label{ads}
Let $(E,\F')$ be a normal set system. 
Then there is exactly one binary delta-matroid $D=(E,\F)$ such that the collections of sets of size at most two in $\F$ and $\F'$ are identical.
\end{theorem}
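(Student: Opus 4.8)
The plan is to work in the matrix model for normal binary delta-matroids, where the whole statement reduces to a $2\times 2$ determinant computation over $\GF(2)$. Since $\F'$ contains $\emptyset$, any delta-matroid $D=(E,\F)$ as in the statement is forced to be normal, and a normal binary delta-matroid on ground set $E$ is by definition isomorphic to $D(\mat{A})$ for some symmetric matrix $\mat{A}$ over $\GF(2)$. Transporting the isomorphism back, I may assume the rows and columns of $\mat{A}$ are indexed by $E$ itself and that $D=D(\mat{A})$ on the nose; simultaneously permuting rows and columns preserves non-singularity of principal submatrices, so this relabelling does not change which subsets are feasible.

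The first step is to record the dictionary between small feasible sets of $D(\mat{A})$ and entries of $\mat{A}$. The set $\emptyset$ is always feasible, by convention. A singleton $\{e\}$ is feasible if and only if $\mat{A}[\{e\}]=(A_{ee})$ is non-singular, i.e. $A_{ee}=1$. For distinct $e,f$, using $-1=1$ and $x^{2}=x$ in $\GF(2)$,
\[
\det\mat{A}[\{e,f\}] = A_{ee}A_{ff} - A_{ef}^{2} = A_{ee}A_{ff} + A_{ef},
\]
so $\{e,f\}$ is feasible if and only if $A_{ee}A_{ff}+A_{ef}=1$. Consequently $\mat{A}$ is completely determined by the feasible sets of size at most two: the diagonal is read off from the feasible singletons, and then $A_{ef}=A_{ee}A_{ff}+[\,\{e,f\}\text{ feasible}\,]$ recovers each off-diagonal entry.

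Uniqueness is then immediate: if $D_1=D(\mat{A}_1)$ and $D_2=D(\mat{A}_2)$, with $\mat{A}_1,\mat{A}_2$ symmetric over $\GF(2)$ and indexed by $E$, have the same feasible sets of size at most two, the dictionary forces $\mat{A}_1=\mat{A}_2$ and hence $D_1=D_2$. For existence I would run the dictionary backwards: given the normal set system $(E,\F')$, define a symmetric $\GF(2)$-matrix $\mat{A}$ by setting $A_{ee}=1$ exactly when $\{e\}\in\F'$ and $A_{ef}=A_{fe}=A_{ee}A_{ff}+[\,\{e,f\}\in\F'\,]$ for $e\ne f$. By Bouchet's theorem $D(\mat{A})$ is a delta-matroid, and it is normal and binary by construction; evaluating the dictionary forwards shows its feasible sets of size at most two are precisely $\emptyset$, the singletons of $\F'$, and the pairs of $\F'$, which is what is required (here one uses $A_{ee}A_{ff}+A_{ef}=[\,\{e,f\}\in\F'\,]$, since $2A_{ee}A_{ff}=0$ over $\GF(2)$).

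I do not anticipate a real obstacle: the one substantial ingredient, that $D(\mat{A})$ obeys the Symmetric Exchange Axiom, is quoted from Bouchet, and everything else is the $\GF(2)$ minor computation above together with some bookkeeping. The points that need a little care are the $\GF(2)$ identities $-1=1$ and $x^{2}=x$, which collapse the $2\times 2$ determinant into its clean additive form, and the relabelling step: since ``binary'' is defined only up to isomorphism, one should be explicit that the representing matrices may be taken on the common index set $E$ before comparing entries. It is also worth confirming that, for \emph{normal} delta-matroids, the definition of binarity as ``isomorphic to some $D(\mat{A})$'' is the one in force, so that the matrix model genuinely applies to the $D$ in the statement.
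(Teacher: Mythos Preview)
The paper does not give its own proof of this statement: it is quoted as a result of Bouchet and Duchamp~\cite{BD91} and used as a black box. So there is no in-paper argument to compare against.

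Your proposal is correct and is essentially the standard proof. The key computation---that over $\GF(2)$ the $2\times 2$ principal minor satisfies $\det\mat{A}[\{e,f\}]=A_{ee}A_{ff}+A_{ef}$ because $A_{ef}^{2}=A_{ef}$ and $-1=1$---is exactly what makes the small feasible sets determine the matrix entrywise, and your back-and-forth dictionary is the right way to organise both existence and uniqueness. The relabelling remark (that one may take $\mat{A}$ to be indexed by $E$ itself, since the paper defines binarity only up to isomorphism) is a genuine point and you handle it correctly. The only external input is Bouchet's theorem that $D(\mat{A})$ satisfies the Symmetric Exchange Axiom, which the paper also takes as given.
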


The \emph{fundamental graph} of a normal even delta-matroid $D=(E,\F)$ is the graph with vertex set $E$, and with an edge $xy$ if and only if $\{x,y\}\in \F$. The fundamental graph completely determines the ground set and the feasible sets of size at most two of an even set system. Hence, by Theorem~\ref{ads}, it uniquely determines a  normal even binary delta-matroid. Thus we have that a normal even binary delta-matroid is completely determined by its fundamental graph, and, conversely, that every simple graph uniquely determines a  normal even binary delta-matroid.

\subsection{Ribbon graphs}\label{srg}

A {\em ribbon graph} $\G=\left(V,E\right)$ is a surface with boundary, represented as the union of two sets of discs --- a set $V$ of {\em vertices} and a set $E$ of {\em edges} --- such that: (1) the vertices and edges intersect in disjoint line segments; (2) each such line segment lies on the boundary of precisely one vertex and precisely one edge; and (3) every edge contains exactly two such line segments.  A ribbon graph is shown in Figure~\ref{ar2}.

A \emph{bouquet} is a ribbon graph on exactly one vertex, and a \emph{quasi-tree} is a ribbon graph that has exactly one boundary component (recalling that a ribbon graph is always a surface with boundary).  A loop in a ribbon graph is  \emph{non-orientable} if together with its incident vertex it forms a M\"obius band, and is  \emph{orientable} otherwise.  For a ribbon graph $\G=(V,E)$, and a subset $A$ of its edges, we let $k(A)$ denote the number of connected components of the ribbon subgraph $(V,A)$, and we let $b(A)$ denote the number of boundary components of $(V,A)$.

It is well-known that ribbon graphs are equivalent to cellularly embedded graphs in surfaces. (Ribbon graphs arise naturally from neighbourhoods of cellularly embedded graphs. On the other hand, topologically a ribbon graph is a  surface with boundary, and capping the holes gives rise to a cellularly embedded graph in the obvious way. See \cite{MR3086663,GT87} for details.)  
We say that a ribbon graph is \emph{plane} if it  describes a graph  cellularly embedded in a disjoint union of spheres. 

Ribbon graph equivalence agrees with the usual notion of equivalence of graphs in surfaces. 
Two ribbon graphs  $\G_1=(V_1, E_1)$ and $\G_2=(V_2, E_2)$ are \emph{equivalent} if there is a homeomorphism (which is orientation preserving when $\G_1$ is orientable) from $\G_1$ to $\G_2$  mapping $V_1$ to $V_2$, and $E_1$ to $E_2$, and so preserving the cyclic order of half-edges at each vertex.

Arrow presentations, from~\cite{Ch09}, provide  convenient combinatorial descriptions of  ribbon graphs. An \emph{arrow presentation}  is a set of closed curves,  each with a collection of disjoint  labelled arrows  lying on them, and  where each label appears on precisely two arrows. An arrow presentation is shown in Figure~\ref{ar1}.

Arrow presentations describe  ribbon graphs.
A ribbon graph $\G$ can be formed from an arrow presentation by  identifying each closed curve with the boundary of a disc (forming the vertex set of $\G$). Then, for each pair of $e$-labelled arrows, taking a disc (which will form an edge of $\G$), orienting its boundary, placing two disjoint arrows on its boundary that point in the direction of the orientation, and identifying each $e$-labelled arrow on this edge. See Figure~\ref{ar}.
 Conversely a ribbon graph can be described as an arrow presentation by arbitrarily labelling and orienting the boundary of each edge disc of $\G$. Then on each arc where an edge disc intersects a vertex disc, place an arrow on the vertex disc, labelling the arrow with the label of the edge it meets and directing it consistently with the orientation of the edge disc boundary. The boundaries of the vertex set marked with these labelled arrows give  an arrow presentation.  
Arrow presentations are equivalent if they describe  isomorphic ribbon graphs.

\begin{figure}[!ht]
       \begin{subfigure}[t]{0.4\textwidth}
    \centering
         \labellist
\small\hair 2pt
\pinlabel {$1$} at 180 104
\pinlabel {$2$} at 17 17
\pinlabel {$3$} at 120 120
\pinlabel {$4$} at  154 24
\endlabellist
\includegraphics[scale=0.5]{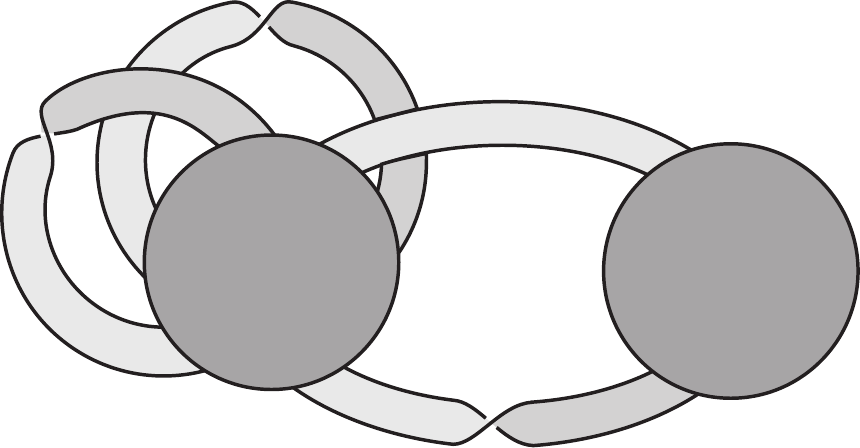}
        \caption{A ribbon graph}
        \label{ar2}
    \end{subfigure}
    \qquad
     \centering
    \begin{subfigure}[t]{0.4\textwidth}
    \centering
   \labellist
\small\hair 2pt
\pinlabel {$1$} at 159 97
\pinlabel {$2$} at 16 26
\pinlabel {$3$} at 10 74
\pinlabel {$4$} at  156 17
\pinlabel {$1$} at 77 103
\pinlabel {$2$} at 40 104
\pinlabel {$3$} at 102 82
\pinlabel {$4$} at  83 17
\endlabellist
\includegraphics[scale=0.5]{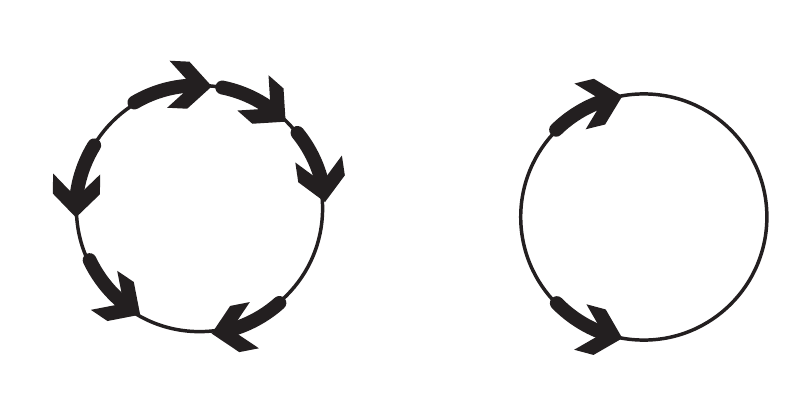}
        \caption{An arrow presentation}
\label{ar1}
    \end{subfigure}
  \qquad

    \caption{A ribbon graph and its description as an arrow presentation}
        \label{ar}
\end{figure}

We shall make use of two notions of duals of ribbon graphs: partial duals and partial petrials. Both notions have a natural geometric definition in terms of ribbon graphs, but for the application here, their definitions in terms of arrow presentations are most helpful.

Informally, partial petriality, introduced in \cite{MR2869185}, may be though of as an operation that ``adds or removes half-twists to the edges of a ribbon graph''. 
     Formally, the \emph{partial petrial} of $\G$ formed with respect to $e$ is the ribbon graph $\G^{\tau(e)}$ obtained from $\G$ by  detaching an end of $e$ from its incident vertex $v$ creating arcs $[a,b]$ on $v$, and $[a',b']$ on $e$  (so that $\G$ is recovered by identifying $[a,b]$ with $[a',b']$), then reattaching the end  by identifying the arcs antipodally, so that $[a,b]$ is identified with $[b',a']$. For a subset of edges $A$ of $\G$, $\G^{\tau(A)}$ is the ribbon graph obtained by forming the  partial petrial of with respect to each edge in $A$ (in any order).
 
 In terms of arrow presentations, an arrow presentation for $\G^{\tau(e)}$ can be obtained by reversing the direction of exactly one $e$-labelled arrow in an arrow presentation for $\G$.  See Figure~\ref{du}.

\begin{figure}[!ht]
    \centering
    \labellist
\small\hair 2pt
\pinlabel {$e$} at 45 35 
\pinlabel {$e$} at 71 35
\endlabellist  
\includegraphics[scale=0.5]{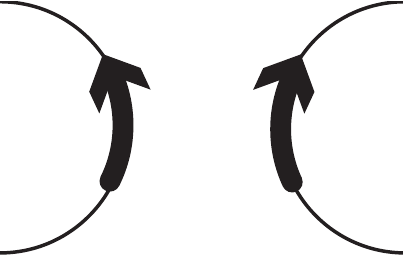}
\qquad\raisebox{6mm}{$\xleftrightarrow[\hspace*{1cm}]{\G^{\tau(e)}}$}\qquad
            \labellist
\small\hair 2pt
\pinlabel {$e$} at 45 35 
\pinlabel {$e$} at 71 35
\endlabellist  
\includegraphics[scale=0.5]{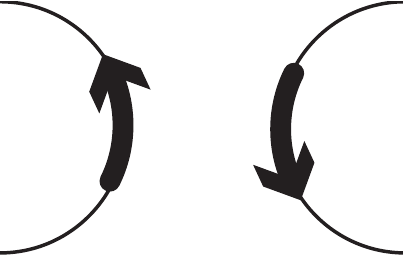}
\qquad\raisebox{6mm}{$\xleftrightarrow[\hspace*{1cm}]{\G^{\{e\}}}$}\qquad
 \labellist
\small\hair 2pt
\pinlabel {$e$} at 60 67 
\pinlabel {$e$} at 60 6
\endlabellist  
\includegraphics[scale=0.5]{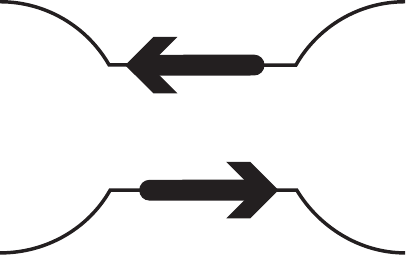}
     \caption{Partial petrial and  partial dual in terms of arrow presentations}
         \label{du}
\end{figure}

In terms of ribbon graphs, the \emph{(geometric) dual} $\G^*$ of $\G$, is formed by gluing a disc, which will form a vertex of $\G^*$, to each boundary component of $\G$ by identifying the boundary of the disc with the curve, then removing the interior of all vertices of $\G$. A \emph{partial dual}, introduced by Chmutov in~\cite{Ch09}, is obtained by ``forming the dual with respect to a subset of the edges'' as follows.  Let $\G=(V, E)$ be a ribbon graph, $A\subseteq E$, and regard the boundary components of the  ribbon subgraph $(V,A)$  as curves on $\G$. Glue a disc to $\G$ along each of these curves by identifying the boundary of the disc with the curve, and remove the interior of all vertices of $\G$. The resulting ribbon graph is  the partial dual $\G^{A}$. 

Partial duality can be described as a splicing operation on arrow presentations.  Let $\G$ be a ribbon graph with an edge $e$.  Then the \emph{partial dual} of $\G$ with respect to $e$ is the ribbon graph  denoted  $\G^{\{e\}}$  obtained from $\G$ by the following process: (1) describe $\G$ as an arrow presentation, (2) `splice' the arrow presentation at the two $e$-labelled arrows as indicated in the right-hand side of Figure~\ref{du}.  (3) The ribbon graph described by this arrow presentation is $\G^{\{e\}}$.

When $e\neq f$ are edges of a ribbon graph $\G=(V,E)$, it is easily seen that $(\G^{\{e\}})^{\{f\}}=(\G^{\{f\}})^{\{e\}}$. Thus  $\G^A$ can be obtained from $\G$ by forming the partial dual with respect to each edge of $A$ in any order.

\subsection{Ribbon graphic delta-matroids}\label{s22}

The \emph{cycle matroid} $C(G)$ of a graph $G=(V,E)$ can be defined as the pair  $(E, \B)$, where $\B$ consists of the edge sets of the maximal spanning forests of $G$. Thus the cycle matroid  $C(\G)$ of a ribbon graph  $\G=(V,E)$ can be defined as $(E, \B)$, where 
$\B :=  \{ A\subseteq E : b(A)=k(E) \text{ and } (V,A) \text{ is a plane ribbon graph}\} $ (since a connected, plane ribbon graph with exactly one boundary component is precisely a tree).  The delta-matroid $D(\G)$ of a ribbon graph $\G$ arises by dropping the plane requirement in this formulation of $C(\G)$. It provides an embedded graph analogue of a cycle matroid.  

Let $\G=(V, E)$ be a ribbon graph. Then $D(\G)$ is the set system $(E, \F)$ where $\F := \{ A\subseteq E : b(A)=k(E)\} $. In particular, this means that when $\G$ is connected, $\F$ is collection of the edge sets of all spanning quasi-trees of $\G$ (whereas $C(\G)$ is formed from the edge sets of the spanning trees).
As an example, if  $\G$ is  the ribbon graph in Figure~\ref{ar2}, then 
 $D(\G)=(E,\F)$ where $E=\{1,2,3,4\}$ and 	
$ \F=\{ \{1\}, \{4\}, \{1,2\} , \{1,3\} , \{1,4\} , \{2,4\} , \{3,4\}   ,\{1,2,4\}  ,\{1,2,3,4\} \}$.  

A proof that $D(\G)$ is indeed a delta-matroid can be found in \cite{ab2}, where it is written in terms of Eulerian circuits in medial graphs, or  in~\cite{CMNR1}, where it is written in terms of ribbon graphs.

\medskip

The importance of partial duals and partial petrials here is that they are the ribbon graph analogues of the fundamental delta-matroid operations of twisting and loop-complementation. The following theorem collects this together with some other  properties of ribbon graphic delta-matroids.
 In the theorem, Items~\ref{dan1} and~\ref{dan2} are due to Bouchet and from~\cite{ab2} and~\cite{ab88}, respectively. Items~\ref{dan3} and~\ref{dan4} are due to Chun, Moffatt, Noble and Rueckriemen and from~\cite{CMNR1} and~\cite{CMNR2}, respectively.
\begin{theorem}\label{dan}
Let $\G=(V, E)$ be a ribbon graph, and $A\subseteq E$. Then, 
\begin{enumerate}
\item \label{dan1} $D(\G)$ is even if and only if $\G$ is orientable, 
\item \label{dan2} $D(\G)$ is binary, 
\item\label{dan3} $D(\G^A) = D(\G)\ast A$,
\item   \label{dan4} $D(\G^{\tau(A)})=D(\G)+A$.
\end{enumerate}
\end{theorem}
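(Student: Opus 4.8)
The plan is to prove Theorem~\ref{dan} one item at a time, exploiting the fact that each clause can be reduced to a statement about boundary components of ribbon subgraphs, and that both partial duality and partial petriality are local operations on arrow presentations.

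For Item~\ref{dan1}, I would recall that every feasible set $A$ of $D(\G)$ satisfies $b(A)=k(E)$, and use the Euler-genus/parity relation: for a ribbon subgraph $(V,A)$ one has $|V|-|A|+b(A)=2k(A)-\gamma(A)$, where $\gamma$ is the total Euler genus. When $\G$ is orientable every spanning ribbon subgraph is orientable so $\gamma(A)$ is even for all $A$ with $k(A)=k(E)$, forcing $|A|\equiv |V|-k(E) \pmod 2$ for all feasible $A$; hence $D(\G)$ is even. Conversely, if $\G$ is non-orientable it contains a non-orientable loop or a non-orientable cycle, and toggling a single edge in/out of a quasi-tree changes the genus parity, producing two feasible sets of opposite parity. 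The only mild care needed is to handle disconnected $\G$ componentwise.

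For Items~\ref{dan3} and~\ref{dan4}, which are the heart of the statement, I would argue by induction on $|A|$, so that it suffices to treat a single edge $e$. Using the arrow-presentation descriptions from Figure~\ref{du}: forming $\G^{\{e\}}$ splices the two $e$-labelled arrows, and forming $\G^{\tau(e)}$ reverses one of them. In each case I would track how the boundary components of a spanning ribbon subgraph $(V,A')$ change. The key combinatorial lemma is that, for any $A'\subseteq E$, the boundary components of the relevant subgraph of $\G^{\{e\}}$ correspond to those of the subgraph of $\G$ obtained by replacing $A'$ with $A'\triangle\{e\}$ on the ground set; this is exactly the statement that $b$ is governed by the trace/partition structure that splicing permutes, and it yields $A'\in\F(\G^{\{e\}})\iff A'\triangle\{e\}\in\F(\G)$, i.e.\ $D(\G^{\{e\}})=D(\G)\ast\{e\}$. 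For the partial petrial, reversing one arrow on $e$ has the effect, at the level of feasible sets of size $\le 1$ and $2$, of leaving $\F$ unchanged except for toggling whether $\{e\}$ and the pairs through $e$ are feasible in precisely the pattern prescribed by loop complementation $D+e$; combined with Item~\ref{dan2} and Theorem~\ref{ads}, matching the feasible sets of size at most two is enough to conclude $D(\G^{\tau(e)})=D(\G)+e$ on the nose. (Alternatively one can cite the direct boundary-count computations in~\cite{CMNR1,CMNR2}.)

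For Item~\ref{dan2}, I would produce the symmetric $\GF(2)$-matrix explicitly. Pick a quasi-tree spanning structure: after a twist by a feasible set $A_0$ we may assume $D(\G)$ is normal, i.e.\ $\emptyset\in\F$. Then define $\mat A$ over $\GF(2)$ by $\mat A_{ee}=1$ if $\{e\}\in\F$ (equivalently $e$ is a loop-type edge in the spanning quasi-tree structure) and $\mat A_{ef}=1$ if $\{e,f\}\in\F$; this is precisely the adjacency matrix of the fundamental graph augmented with a diagonal. One checks $D(\mat A)=D(\G)$ using Theorem~\ref{ads}: both are binary, both normal, and by construction they have the same feasible sets of size at most two. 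The main obstacle in the whole proof is the single-edge boundary-component bookkeeping for partial duality in Item~\ref{dan3} --- verifying that splicing at the two $e$-arrows implements symmetric difference by $\{e\}$ on every spanning subgraph simultaneously --- since this is where one must be careful about whether the two ends of $e$ lie on the same or different boundary components of $(V,A')$; the other items then follow relatively formally from it together with Theorems~\ref{ads} and~\ref{dan}(\ref{dan2}).
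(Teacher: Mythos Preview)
The paper does not give its own proof of Theorem~\ref{dan}; it is stated as background and each item is attributed to the literature (Items~\ref{dan1} and~\ref{dan2} to Bouchet~\cite{ab2,ab88}, Items~\ref{dan3} and~\ref{dan4} to~\cite{CMNR1,CMNR2}). So your proposal is not competing with an argument in the paper, and your parenthetical ``alternatively one can cite the direct boundary-count computations in~\cite{CMNR1,CMNR2}'' is in fact exactly what the paper does.

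That said, your sketch has a genuine circularity in Item~\ref{dan2}. You write that one checks $D(\mat A)=D(\G)$ using Theorem~\ref{ads} because ``both are binary, both normal, and by construction they have the same feasible sets of size at most two''. But $D(\G)$ being binary is precisely the statement you are trying to prove. Theorem~\ref{ads} only says that there is a \emph{unique binary} delta-matroid with prescribed feasible sets of size at most two; it does not say that an arbitrary normal delta-matroid with those small feasible sets coincides with it. To close this, you must show directly that for every $X\subseteq E$ the principal submatrix $\mat A[X]$ is non-singular over $\GF(2)$ if and only if the spanning ribbon subgraph on $X$ has $b(X)=k(E)$. That is the actual content of Bouchet's proof in~\cite{ab88}, and it is not a formality.

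The same issue propagates into your argument for Item~\ref{dan4}: even granting Item~\ref{dan2} for $\G^{\tau(e)}$, your use of Theorem~\ref{ads} to conclude $D(\G^{\tau(e)})=D(\G)+e$ requires knowing that $D(\G)+e$ is itself binary, which you have not established (it is true, via the Brijder--Hoogeboom observation that loop complementation on $D(\mat A)$ flips a diagonal entry of $\mat A$, but you would need to supply that step). Your outlines for Items~\ref{dan1} and~\ref{dan3} are fine in spirit; the Euler-genus parity argument and the single-edge splicing bookkeeping are indeed the standard routes.
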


\section{Mutation and the main result}

This section provides the terminology for Theorem~\ref{t1}. In particular, it introduces ribbon graph analogues of  vertex identification, vertex cleaving, and  Whitney twisting.
We start with the analogues of vertex identification and vertex cleaving. 
\begin{definition}\label{dsga}
Suppose that $\G_1$ and $\G_2$ are ribbon graphs.  For $i=1,2$, suppose that $\alpha_i$ is an  arc that lies on the boundary of $\G_i$  and entirely on  a vertex boundary.  If a ribbon graph $\G$ can be obtained from $\G_1$ and $\G_2$ by identifying the arc $\alpha_1$ with $\alpha_2$ (where the identification merges the vertices), then we say that 
$\G$ is obtained from $\G_1$ and $\G_2$ by a \emph{vertex join}, and that 
 $\G_1$ and $\G_2$  are obtained from $\G$ by a \emph{vertex cut}.
\end{definition}
The operations introduced in Definition~\ref{dsga}  are illustrated in Figure~\ref{vcj}, and are standard operations in ribbon graph theory. 
We say that two ribbon graphs are related by vertex joins and vertex cuts if there is a sequence of such operations taking one to the other. Note that vertex joins and vertex cuts provide a way to delete or add isolated vertices.
  It is important to observe that the definition of a vertex join does not allow for any ``interlacing'' of the edges of $G_1$ and $G_2$. For example, the ribbon graph in Figure~\ref{ar2} cannot be written as the vertex join of two non-trivial ribbon graphs.

\begin{figure}[!ht]
    \centering
     \labellist
\small\hair 2pt
\pinlabel {$\G_1$} at 18 31
\pinlabel {$\G_2$} at 113 31
\endlabellist  
\includegraphics[scale=1]{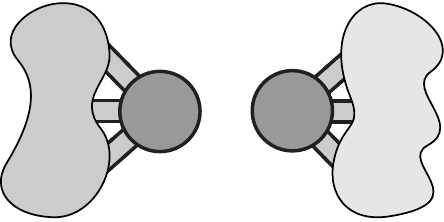}
\quad\raisebox{10mm}{
\begin{tabular}{c}
$\xrightarrow[\hspace*{1cm}]{\text{vertex join}}$ \\
$\xleftarrow[\text{vertex cut}]{\hspace*{1cm}}$
\end{tabular}
}
 \quad
 \labellist
\small\hair 2pt
\pinlabel {$\G$} at 46 31
\endlabellist  
\includegraphics[scale=1]{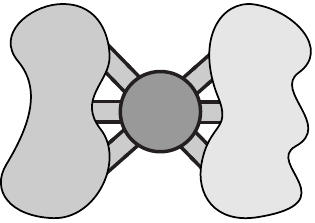}
     \caption{Vertex joins and vertex cuts}
         \label{vcj}
\end{figure}

The following definition introduces an operation called mutation which  provides a ribbon graph analogue of Whitney twisting. It is illustrated in Figure~\ref{padon}.
The figure illustrates a local change in a ribbon graph (so the ribbon graphs are identical outside of the region shown), and the two parts of vertices that are shown in it may be from the same vertex.

\begin{figure}[!ht]
  
    \centering
     \labellist
\small\hair 2pt
\pinlabel {$\G_2$} at 20 33
\endlabellist 
        \includegraphics[scale=1]{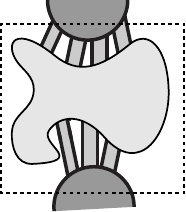}
        \qquad
                     \labellist
\small\hair 2pt
\pinlabel {\reflectbox{$\G_2$}} at 32 33
\endlabellist 
            \includegraphics[scale=1]{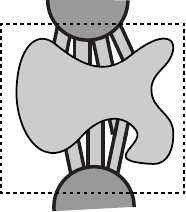}
      \qquad
        \labellist
\small\hair 2pt
\pinlabel {\raisebox{\depth}{\scalebox{1}[-1]{{$\G_2$}}}} at 20 29
\endlabellist 
            \includegraphics[scale=1]{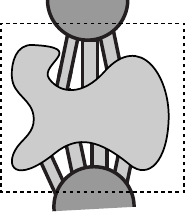}           
           \qquad
            \labellist
\small\hair 2pt
\pinlabel {\raisebox{\depth}{\scalebox{1}[-1]{{\reflectbox{$\G_2$}}}}} at 32 29
\endlabellist 
                        \includegraphics[scale=1]{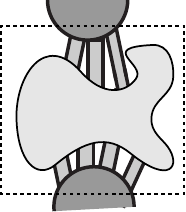}

    \caption{Mutation for ribbon graphs. (The vertex parts shown need not be from distinct vertices.)}
        \label{padon}
\end{figure}

\begin{definition}\label{sas}
Let $\G_1$ and $\G_2$ be ribbon graphs. For $i=1,2$, let $\alpha_i$ and $\beta_i$ be two disjoint directed arcs that lie on the boundary of $\G_i$  and lie entirely on boundaries of (one or two) vertices. Furthermore suppose that $\G$ is a ribbon graph that is obtained by identifying the arcs $\alpha_1$ with $\alpha_2$, and $\beta_1$ with $\beta_2$, where both identifications are consistent with the direction of the arcs. (The identification merges the vertices.) Suppose further that $\bH$  is a ribbon graph obtained by either:
\begin{enumerate}
 \item identifying $\alpha_1$ with $\alpha_2$, and  $\beta_1$ with $\beta_2$, where the identifications are \emph{inconsistent} with the direction of the arcs;
\item identifying $\alpha_1$ with $\beta_2$, and  $\beta_1$ with $\alpha_2$, where the identifications are \emph{consistent} with the direction of the arcs;
\item identifying $\alpha_1$ with $\beta_2$, and  $\beta_1$ with $\alpha_2$, where the identifications are \emph{inconsistent} with the direction of the arcs.
 \end{enumerate}
Then we say that $\G$ and $\bH$ are related by \emph{mutation}, and that $\G$ and $\bH$ are \emph{mutants}.
\end{definition}

We note that implicit in Definition~\ref{sas} is that the arcs $\alpha_i$ and $\beta_i$ lie on at least three distinct vertices. (Since if they were on a total of two vertices, $\G$ would not be a ribbon graph.) It may also be the case that the vertices of $\G_1$ and $\G_2$ are merged into a single vertex (so, for example, mutation can act on bouquets).

\begin{lemma}\label{mkdjk}
Let $\G$ and $\bH$ be ribbon graphs on the same edge set $E$, and let $A\subseteq E$. Then the following hold:
\begin{enumerate}
\item $\G$ and $\bH$ are mutants if and only if $\G^{\tau (A)}$ and $\bH^{\tau (A)}$ are mutants,
\item $\G$ and $\bH$ are mutants if and only if $\G^{A}$ and $\bH^{A}$ are  mutants.
\end{enumerate}
\end{lemma}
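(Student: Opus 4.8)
The plan is to reduce both parts to the case where $A=\{e\}$ is a single edge, and then argue geometrically (most conveniently via arrow presentations). Partial petriality and partial duality with respect to a fixed edge are involutions, and, as recalled in Section~\ref{srg}, $\G^{\tau(A)}$ and $\G^{A}$ are obtained by applying $\tau(e)$, respectively $\{e\}$, over the edges $e\in A$ one at a time, in any order. Hence it suffices to prove: for a single edge $e\in E$, if $\G$ and $\bH$ are mutants then so are $\G^{\tau(e)}$ and $\bH^{\tau(e)}$, and likewise so are $\G^{\{e\}}$ and $\bH^{\{e\}}$. The reverse implications follow by applying the same operation a second time to the pair $\G^{\tau(e)},\bH^{\tau(e)}$ (respectively $\G^{\{e\}},\bH^{\{e\}}$), and the statements for a general subset $A$ then follow by iterating over the elements of $A$.

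So suppose $\G$ and $\bH$ are mutants. By Definition~\ref{sas} there are ribbon graphs $\G_1,\G_2$, whose edge sets partition $E$, and directed arcs $\alpha_i,\beta_i$ lying on vertex boundaries of $\G_i$, such that $\G$ is obtained by identifying $\alpha_1$ with $\alpha_2$ and $\beta_1$ with $\beta_2$ consistently, while $\bH$ is obtained by one of the three alternatives (1)--(3) of Definition~\ref{sas}; by Figure~\ref{padon} these amount to re-attaching $\G_2$ to $\G_1$ along the same pair of arcs after reflecting or rotating $\G_2$. I would describe $\G_1$ and $\G_2$ by arrow presentations in which the two $e$-labelled arrows avoid all of $\alpha_1,\beta_1,\alpha_2,\beta_2$; this is possible since each of these arcs lies within a single closed curve and can be slid clear of the finitely many arrows. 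In this description, mutation is carried out as a surgery on the closed curves along the arcs $\alpha_i,\beta_i$, possibly together with a reversal of the orientations of the curves of $\G_2$, whereas $\G^{\tau(e)}$ and $\G^{\{e\}}$ are produced by the strictly local moves of Figure~\ref{du}: reversing one $e$-labelled arrow, or splicing at the two $e$-labelled arrows.

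The crux is that the $e$-local move and the mutation surgery are independent. The edge $e$ belongs to exactly one of $\G_1,\G_2$, say $\G_j$. Reversing an $e$-arrow, or splicing at the two $e$-arrows, alters $\G_j$ only in a neighbourhood of $e$ disjoint from $\alpha_j$ and $\beta_j$, so it may be performed before the arcs are identified; hence $\G^{\tau(e)}$, respectively $\G^{\{e\}}$, is exactly the mutation built from $\G_j^{\tau(e)}$, respectively $\G_j^{\{e\}}$, and the unchanged other piece, using the same arc data. The same holds for $\bH$ once one observes that reversing an $e$-arrow or splicing at the $e$-arrows commutes with reversing the orientations of the closed curves carrying the other label arrows (the two moves act on disjoint data), so that reflecting or rotating $\G_j$ and then applying the $e$-local move gives the same ribbon graph as applying the $e$-local move first. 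Therefore $\G^{\tau(e)}$ and $\bH^{\tau(e)}$, respectively $\G^{\{e\}}$ and $\bH^{\{e\}}$, are assembled from the same two pieces by the same mutation move, so they are mutants.

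I expect the one delicate point to be the bookkeeping in the partial-dual case: splicing at the two $e$-labelled arrows can merge or subdivide the closed curves that carry them, so one must check that $\alpha_j$ and $\beta_j$ still lie on honest vertex boundaries of $\G_j^{\{e\}}$ and that the four identifications of Definition~\ref{sas} still make sense there. Having chosen the arrow presentation with the $e$-arrows disjoint from $\alpha_j$ and $\beta_j$ confines this to a neighbourhood of $e$, where it reduces to the local pictures of Figure~\ref{du}; the only subcase requiring extra care is when $e$ is a loop both of whose ends lie on the vertex (or vertices) being cut in the mutation, which is handled by applying the same local analysis at each end of $e$ in turn.
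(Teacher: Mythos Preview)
Your proposal is correct and follows essentially the same approach as the paper: both arguments work in arrow presentations, observe that the arrows of $\G_1$ and $\G_2$ can be kept disjoint from the mutation arcs, and conclude that partial duality and partial petriality (being local to the arrows) commute with mutation (being local to the arcs), with the converse direction coming from the involutory nature of the operations. Your reduction to a single edge and the extra bookkeeping in your final paragraph are more explicit than the paper's treatment, but the worry you flag there is already dispatched by your own setup: once the $e$-arrows avoid $\alpha_j,\beta_j$, those arcs automatically survive as arcs on the closed curves of $\G_j^{\{e\}}$, so no separate ``loop subcase'' analysis is needed.
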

\begin{proof}
The results follow since mutation, partial duality, and partial petriality are local operations acting on different parts of the ribbon graph, and so may be carried out in any order. This can be seen easily by considering arrow presentations. Suppose that $\G$,  $\G_1$,  $\G_2$, and $\bH$ are all as in  Definition~\ref{sas}. An arrow presentation for $\G_1 \sqcup \G_2$ can drawn as in Figure~\ref{adhk}, where all the arrows are contained inside the shaded regions  in the figure, and where $\G_1$ is represented by the arrow presentation $\calA_1$, and $\G_2$ by $\calA_2$. In particular, this means that corresponding pairs of arrows are contained in the same shaded regions.  $\G$ and $\bH$ are then obtained by replacing the configuration shown in the dotted box with one of those shown in Figure~\ref{anjda}. Then as partial duality and partial petriality act entirely within the shaded regions of $\calA_1$ and $\calA_2$, it follows that mutation commutes with each of partial duality and partial petriality. The result then follows since partial petriality and partial duality are involutory.
\end{proof}

\begin{figure}[!ht]
    \centering
    \begin{subfigure}[t]{0.4\textwidth}
    \centering
    \labellist
\small\hair 2pt
\pinlabel $\calA_1$ at 26 35
\pinlabel $\calA_2$ at 100 35
\endlabellist
        \includegraphics[scale=1]{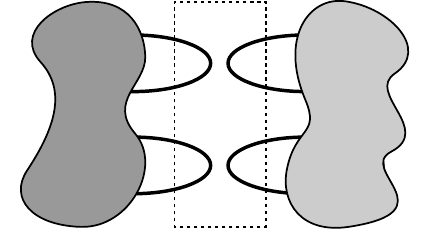}
        \caption{Arrow presentation}
\label{adhk}
    \end{subfigure}
  \qquad
    \begin{subfigure}[t]{0.4\textwidth}
    \centering
        \includegraphics[scale=1]{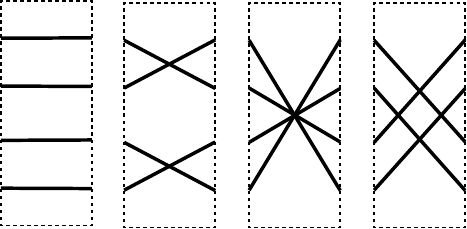}
        \caption{Configurations for mutation}
        \label{anjda}
    \end{subfigure}
    \caption{Mutation in terms of arrow presentations}
        \label{anjd}
\end{figure}

\section{The proof of Theorem~\ref{t1}}\label{s3}

Our proof of of Theorem~\ref{t1} depends upon a result about circle graphs.  
A \emph{chord diagram} consists of a circle in the plane and a number straight line segments, called \emph{chords}, whose end-points lie on the circle. The end-points of chords should all be distinct. Chord diagrams can be regarded as \emph{double occurrence words}, which are cyclically ordered finite sequences of letters over some alphabet, such that each letter occurs exactly twice in the sequence. (Assign a letter to each chord, then the order of the ends of the chord on the circle determines the sequence.) Two   double occurrence words are \emph{equivalent} if there is a bijection between their alphabets that sends one double occurrence word to the other or to the reversed word. Two chord diagrams are \emph{isomorphic} if they give rise to equivalent  double occurrence words. 

The \emph{intersection graph} of a chord diagram is the graph $G=(V,E)$ where $V$ is the set of chords, and where $uv\in E$ if and only if the chords $u$ and $v$ intersect. A graph is a \emph{circle graph} if it is the intersection graph of a chord diagram.  
In general, different chord diagrams can have the same intersection graph.

  Following the terminology of  \cite{CL}, a  \emph{share} in a chord diagram  is a pair of disjoint arcs of its circle that have the property that if an end of a chord lies on the arcs, then the other end of the chord also lies on the arcs. In terms of double occurrence words, a \emph{share} $w$ consists of two disjoint subwords such that if one letter is contained in the pair of subwords, then so is the other. 
  
    Let $w=w_1w_2w_3w_4$ be a double occurrence word in which $w_2$ and $w_4$ form a share (and therefore so do $w_1$ and $w_3$).  Then the words $w_1\overline{w_2}w_3\overline{w_4}$, $w_1{w_4}w_3{w_2}$,  and $w_1\overline{w_4}w_3\overline{w_2}$ are \emph{mutants} of $w$, where $\overline{w_i}$ denotes the word obtained by reversing the order of $w_i$.  Two chord diagrams are said to be related by \emph{mutation} if they define mutant double occurrence words (up to equivalence).
 Figure~\ref{dfg} indicates how mutation acts on a chord diagram.

\begin{figure}[!ht]
  
    \centering
     \labellist
\small\hair 2pt
\pinlabel {\rotatebox{90}{share}} at 21 20
\endlabellist 
        \includegraphics[scale=1]{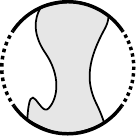}
        \qquad
                     \labellist
\small\hair 2pt
\pinlabel {\reflectbox{\rotatebox{90}{share}}} at 19 20
\endlabellist 
            \includegraphics[scale=1]{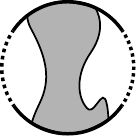}
      \qquad
        \labellist
\small\hair 2pt
\pinlabel {\raisebox{\depth}{\scalebox{1}[-1]{{\rotatebox{90}{share}}}}} at 21 20
\endlabellist 
            \includegraphics[scale=1]{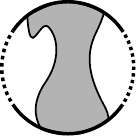}           
           \qquad
            \labellist
\small\hair 2pt
\pinlabel {\raisebox{\depth}{\scalebox{1}[-1]{{\reflectbox{\rotatebox{90}{share}}}}}} at 19 20
\endlabellist 
                        \includegraphics[scale=1]{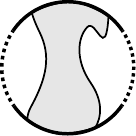}

    \caption{Mutation for chord diagrams}
        \label{dfg}
\end{figure}

The following theorem determines when chord diagrams have the same intersection graph. 
It appears implicitly in  \cite{MR918395,MR2437322,MR1072233}, and explicitly in~\cite{CL} (whose terminology we follow here).
\begin{theorem}\label{t3}
Two chord diagrams are related by mutation and isomorphism if and only if they have isomorphic intersection graphs.
\end{theorem}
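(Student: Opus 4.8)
The plan is to prove Theorem~\ref{t3} by reducing it, via the established dictionary between chord diagrams and ribbon graphs (bouquets), to a statement about delta-matroids, and then invoking the results on binary delta-matroids recalled in Section~\ref{s21}. The key observation is that a chord diagram $C$ is precisely an arrow presentation on a single closed curve, hence describes a bouquet ribbon graph $\G_C$; moreover the intersection graph of $C$ is exactly the fundamental graph of the (normal, even) delta-matroid $D(\G_C^{\tau(E)})$ --- or more precisely, two chords of $C$ intersect if and only if the corresponding two-element set is a spanning quasi-tree after a suitable partial petrial, so that the intersection graph records exactly the feasible sets of size at most two. One direction is then routine: if two chord diagrams are related by mutation and isomorphism, then their intersection graphs are isomorphic, which one checks directly from the definition of mutation on double occurrence words (reversing or swapping shares does not change which pairs of chords are interlaced, since interlacement is a property internal to the cyclic word structure that mutation preserves).

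For the converse --- the substantive direction --- suppose $C$ and $C'$ are chord diagrams with isomorphic intersection graphs $G \cong G'$. First I would reduce to the prime case using Cunningham's split decomposition of the graph $G$: a connected simple graph decomposes canonically into prime pieces glued along a tree structure, and both the family of chord diagrams realising $G$ and the family realising $G'$ respect this decomposition, with the freedom in reassembling the pieces corresponding exactly to mutation moves (each mutation being localised to a share, i.e.\ to an edge of the split-decomposition tree or to a symmetry of one prime component). So it suffices to show that a prime circle graph has an essentially unique chord-diagram realisation up to isomorphism. This last fact is Bouchet's theorem on circle graphs from~\cite{MR918395}: a prime circle graph determines its chord diagram uniquely up to reflection. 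Assembling: given the isomorphism $G \cong G'$, transport it through the split decomposition, match up corresponding prime components (unique realisation by Bouchet), and observe that the two ways of gluing the components back together differ by a sequence of mutations localised at the shares determined by the decomposition tree, together with an overall isomorphism.

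The main obstacle I anticipate is the bookkeeping in the split-decomposition step: one must verify that \emph{every} chord diagram realising $G$ arises from the canonical prime components by gluing, that the gluing ambiguity is \emph{exactly} captured by shares (no more, no less), and that composing the local moves needed to pass from one gluing to another genuinely lands inside the class of mutations as defined on double occurrence words. This is where the definitions of ``share'' and the three mutation operations $w_1\overline{w_2}w_3\overline{w_4}$, $w_1w_4w_3w_2$, $w_1\overline{w_4}w_3\overline{w_2}$ must be matched precisely against the three ways a prime component (or a subtree of the decomposition) can be re-attached: possibly reflected, possibly with its two attaching arcs swapped, possibly both. Handling disconnected graphs, and the trivial cases of small or edgeless graphs (where chords can be freely permuted and each such permutation is realised by mutations), adds routine but necessary case analysis. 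Since the references~\cite{MR918395,MR2437322,MR1072233,CL} already contain these ingredients, the proof is really a matter of correctly citing and stitching them; I would present it as such rather than reproving Cunningham's and Bouchet's theorems.
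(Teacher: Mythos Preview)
Your substantive argument for the converse direction --- Cunningham's split decomposition to reduce to prime circle graphs, then Bouchet's uniqueness for prime circle graphs, with mutation accounting for the gluing ambiguity --- is exactly the approach the paper sketches (the paper does not prove Theorem~\ref{t3} but cites it to~\cite{MR918395,MR2437322,MR1072233,CL} and summarises the idea in the paragraph immediately following the statement). So the core of your proposal is correct and matches the paper.

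One correction to your first paragraph: the detour through delta-matroids is unnecessary for this theorem (the paper runs the logic the other way, using Theorem~\ref{t3} as input to the delta-matroid results), and your formula $D(\G_C^{\tau(E)})$ is wrong. A chord diagram gives an \emph{orientable} bouquet $\G_C$, so $D(\G_C)$ is already normal and even, and its fundamental graph is the intersection graph directly --- no partial petrial is needed. Applying $\tau(E)$ would make every loop non-orientable and destroy evenness. This slip does not affect your main argument, since you (correctly) do not actually use the delta-matroid framing in the converse direction, but you should drop that framing from the opening paragraph to avoid confusion.
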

The idea behind the proof of this theorem is to use Cunningham's theory of graph decompositions from~\cite{MR655562} to obtain a decomposition of an intersection graph into `prime' graphs that have unique intersection graphs. This gives a decomposition of a chord diagram into minimal shares. Mutation then corresponds to the choices that are made when reassembling a chord diagram from these shares.

An orientable bouquet is equivalent to a chord diagram. (The boundary of the vertex forms the circle of the chord diagram, and the edges determine the chords). 
Furthermore, it is easy to see that  two orientable bouquets are mutants if and only if their chord diagrams are mutants. If we define  the intersection graph of a orientable bouquet to be the intersection graph of the corresponding chord diagram, then Theorem~\ref{t3} immediately gives the following. 
\begin{corollary}\label{anja}
Two orientable bouquets are related by mutation and isomorphism if and only if they have isomorphic intersection graphs.
\end{corollary}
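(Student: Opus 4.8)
The plan is to reduce the statement to Theorem~\ref{t3} by setting up a dictionary between orientable bouquets and chord diagrams under which ribbon graph isomorphism, mutation, and intersection graphs all correspond, and then reading off the equivalences. First I would make the correspondence precise. Given an orientable bouquet $\G$ with its single vertex disc $v$, every edge is a loop and orientability forces every such loop to be orientable, so the two arcs along which an edge meets $\partial v$ are identified without a half-twist. Reading the labels of the edge-ends in cyclic order around $\partial v$ therefore yields a double occurrence word $w(\G)$, i.e.\ a chord diagram $C(\G)$; conversely every chord diagram arises this way, and the two constructions are mutually inverse. I would then check that this descends to a bijection between equivalence classes of orientable bouquets and isomorphism classes of chord diagrams: relabelling the edges of $\G$ corresponds to the bijection of alphabets in the definition of equivalence of double occurrence words, and the only remaining freedom --- reversing the reading direction of $\partial v$, i.e.\ applying an orientation-reversing homeomorphism --- corresponds exactly to passing to the reversed word. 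This is the step where the orientation conventions need the most care.

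Next I would show that mutation of bouquets (Definition~\ref{sas}) matches mutation of the associated double occurrence words. Working with arrow presentations as in the proof of Lemma~\ref{mkdjk}, a pair of disjoint directed arcs $\alpha,\beta$ lying on $\partial v$ cuts the boundary word into four blocks $w = w_1w_2w_3w_4$, and the hypothesis of Definition~\ref{sas} --- that no edge has one foot inside the identified region and the other outside --- is precisely the condition that $\{w_2,w_4\}$ (equivalently $\{w_1,w_3\}$) is a share. The three re-gluing cases of Definition~\ref{sas} then produce exactly the words $w_1\overline{w_2}w_3\overline{w_4}$, $w_1w_4w_3w_2$ and $w_1\overline{w_4}w_3\overline{w_2}$: reversing a directed identification reverses the corresponding block, and swapping $\alpha\leftrightarrow\beta$ swaps the blocks $w_2$ and $w_4$. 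Hence $\G$ and $\bH$ are mutants if and only if $C(\G)$ and $C(\bH)$ are. I would also confirm that each re-gluing genuinely yields an \emph{orientable} bouquet, so that no spurious cases arise on the ribbon graph side.

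Finally, since by definition the intersection graph of an orientable bouquet is the intersection graph of its chord diagram, the two correspondences above combined with Theorem~\ref{t3} give the chain: orientable bouquets $\G$ and $\bH$ are related by mutation and isomorphism $\iff$ the chord diagrams $C(\G)$ and $C(\bH)$ are related by mutation and isomorphism $\iff$ they have isomorphic intersection graphs $\iff$ $\G$ and $\bH$ have isomorphic intersection graphs. I expect essentially all the mathematical content to sit in Theorem~\ref{t3}; the only real obstacle is the routine but fiddly verification that the dictionary is faithful on all three structures simultaneously --- in particular that ``ribbon graph isomorphism'' of orientable bouquets is neither coarser nor finer than chord-diagram isomorphism, and that the three cases of Definition~\ref{sas} biject with the three word-mutants with no overlap or omission.
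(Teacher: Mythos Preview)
Your proposal is correct and follows exactly the paper's approach: the paper deduces the corollary immediately from Theorem~\ref{t3} after observing that orientable bouquets correspond to chord diagrams, that mutation of bouquets corresponds to mutation of chord diagrams, and that the intersection graph of a bouquet is by definition that of its chord diagram. You have simply spelled out these identifications in more detail than the paper does.
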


One further observation we use is that the intersection graph of an  orientable bouquet $\G$ coincides with the fundamental graph of $D(\G)$ (since $\{e,f\}$ is a feasible set of $D(\G)$  if and only if the chords corresponding to edges $e$ and $f$ intersect). 

Combining these observations gives the orientable case of the following result.

\begin{lemma}\label{l1}
Let $\G$ and $\bH$ be  bouquets.
Then  $D(\G)\iso D(\bH)$ if and only if $\G$ and $\bH$ are related by mutation and isomorphism. 
\end{lemma}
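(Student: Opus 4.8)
The plan is to reduce the general (possibly non-orientable) bouquet case to the orientable case already handled by Corollary~\ref{anja}, using partial petriality to control orientability. The key point is that a bouquet $\G$ need not be orientable, but by Theorem~\ref{dan}\eqref{dan1} and Theorem~\ref{dan}\eqref{dan4}, forming partial petrials changes $D(\G)$ by loop complementation, and by choosing the set of edges to be petrialized appropriately we can pass to an orientable bouquet (e.g.\ petrialize exactly the non-orientable loops). So first I would record the two facts that make this work: (i) $D(\G)\iso D(\bH)$ if and only if $D(\G^{\tau(A)})\iso D(\bH^{\tau(A)})$ for a suitably matched choice of edge set $A$, via Theorem~\ref{dan}\eqref{dan4} together with the fact that loop complementation is an involution and respects isomorphism; and (ii) by Lemma~\ref{mkdjk}, $\G$ and $\bH$ are related by mutation and isomorphism if and only if $\G^{\tau(A)}$ and $\bH^{\tau(A)}$ are. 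These two facts together say that the statement of Lemma~\ref{l1} is invariant under simultaneously applying $\tau(A)$ to both bouquets.

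The delicate issue is that ``$D(\G)\iso D(\bH)$'' is a statement about an \emph{abstract} isomorphism $\phi\colon E(\G)\to E(\bH)$ that need not be the identity, so the edge sets being petrialized on the two sides must be matched up through $\phi$. Concretely, suppose $D(\G)\iso D(\bH)$ via $\phi$. I would first argue that $\G$ is orientable if and only if $\bH$ is (Theorem~\ref{dan}\eqref{dan1}, since $D(\G)$ even is an isomorphism invariant). If both are orientable, Corollary~\ref{anja} plus the observation that the intersection graph of an orientable bouquet equals the fundamental graph of its delta-matroid finishes this direction immediately, since an isomorphism of even binary delta-matroids induces an isomorphism of fundamental graphs (indeed the fundamental graph is determined by the feasible sets of size at most $2$, which $\phi$ preserves). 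If $\G$ and $\bH$ are both non-orientable, let $A$ be the set of non-orientable loops of $\G$, so that $\G^{\tau(A)}$ is an orientable bouquet; then $\bH^{\tau(\phi(A))}$ has delta-matroid $D(\bH)+\phi(A) \iso D(\G)+A = D(\G^{\tau(A)})$, which is even, hence $\bH^{\tau(\phi(A))}$ is orientable too. Now apply the orientable case to the pair $\G^{\tau(A)}, \bH^{\tau(\phi(A))}$ to conclude they are related by mutation and isomorphism, and then undo the petrials using Lemma~\ref{mkdjk} (and the fact that a ribbon-graph isomorphism carrying $A$ to $\phi(A)$ is what is produced) to get that $\G$ and $\bH$ are related by mutation and isomorphism.

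For the converse direction, I would check that each of the generating moves preserves the delta-matroid up to isomorphism: ribbon graph isomorphism obviously induces a delta-matroid isomorphism; and for mutation I would invoke the fact that mutation of bouquets corresponds to mutation of the associated (petrialized-to-orientable) chord diagrams, which preserves the intersection graph, hence the fundamental graph, hence — since the delta-matroids in play are normal even binary after petrialization — the delta-matroid itself by Theorem~\ref{ads}; then transport back along $\tau(A)$ via Theorem~\ref{dan}\eqref{dan4} and Lemma~\ref{mkdjk}. Alternatively, and more cleanly, one checks directly from Definition~\ref{sas} using arrow presentations that mutation does not change the boundary-component count $b(A)$ of any edge subset $A$, so it literally preserves $D(\G)=(E,\{A: b(A)=k(E)\})$ on the nose; I would lean on this direct argument for the ``if'' direction since it avoids the binary-delta-matroid machinery.

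The main obstacle I anticipate is bookkeeping around the non-identity isomorphism $\phi$: one must be careful that ``related by mutation and isomorphism'' is genuinely symmetric and transitive in the way the argument uses it, and that the petrial set on the $\bH$ side is exactly $\phi(A)$ and that the resulting chord-diagram isomorphism, pulled back, is compatible with $\phi$. A secondary subtlety is ensuring $\G^{\tau(A)}$ is actually orientable when $A$ is the set of non-orientable loops — this is a standard fact about bouquets (each orientable loop is a ``planar'' chord and each non-orientable loop becomes orientable after one half-twist), but it should be stated explicitly. Everything else is a direct combination of Corollary~\ref{anja}, Lemma~\ref{mkdjk}, and Theorem~\ref{dan}.
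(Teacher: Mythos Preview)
Your proposal is correct and follows essentially the same route as the paper: split into the orientable and non-orientable cases, handle the orientable case via Corollary~\ref{anja} together with the identification of the intersection graph with the fundamental graph of the (normal even binary) delta-matroid, and reduce the non-orientable case to the orientable one by partial-petrializing the set of non-orientable loops, using Theorem~\ref{dan}\eqref{dan4} and Lemma~\ref{mkdjk} to transport both sides of the equivalence. The paper streamlines your bookkeeping by observing that the feasible singletons of $D(\G)$ are exactly the non-orientable loops, so any delta-matroid isomorphism automatically carries $A_{\G}$ to $A_{\bH}$; with that observation in hand your tracking of $\phi(A)$ becomes unnecessary, and both directions are handled simultaneously by a single chain of if-and-only-ifs.
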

\begin{proof}
If one of $\G$ or $\bH$ is orientable and the other non-orientable, then they cannot be related by mutation or isomorphism, and, by Item~\ref{dan1} of Theorem~\ref{dan}, they cannot have isomorphic delta-matroids. Thus, without loss of generality we may assume that  $\G$ and $\bH$ are both orientable or both non-orientable.

First suppose that $\G$ and $\bH$ are both orientable. Then $D(\G)$ and $D(\bH)$ are normal, even, binary delta-matroids, and are therefore completely determine by their fundamental graphs. 
Thus $D(\G)\iso D(\bH)$ if and only if they have isomorphic fundamental graphs. 
This happens if and only if $\G$ and $\bH$ have isomorphic intersection graphs (by the observation appearing just after Corollary~\ref{anja}), and hence, by Corollary~\ref{anja}, if and only if $\G$ and $\bH$ are related by mutation and isomorphism. 

Now suppose that $\G$ and $\bH$ are both non-orientable. 
Let $A_{\G}\subseteq E(\G)$ denote the set of all non-orientable loops in $\G$, and $A_{\bH}\subseteq E(\bH)$ denote the set of all non-orientable loops in $\bH$. 
Then $\{e\}$ is a feasible set of $D(\G)$ if and only if $e\in A_{\G}$, and  $\{e\}$ is a feasible set of $D(\bH)$ if and only if $e\in A_{\bH}$. 
Thus any isomorphism between either $\G$ and $\bH$, or $D(\G)$ and $D(\bH)$ identifies $A_{\G}$ and $A_{\bH}$.
Because of this identification, we see 
 $D(\G)\iso D(\bH)$ if and only if  $D(\G)+  A_{\G} \iso D(\bH)+ A_{\bH}$. By Item~\ref{dan4} of Theorem~\ref{dan}, this happens if and only if  $D(\G^{\tau(  A_{\G})}) \iso D(\bH^{\tau( A_{\bH})})$. 
 However, since $\G^{\tau( A_{\G})}$ and  $\bH^{\tau( A_{\bH})}$ are both orientable, it follows from the above that $D(\G^{\tau(  A_{\G})}) \iso D(\bH^{\tau( A_{\bH})})$ if and only if $\G^{\tau( A_{\G})}$ and  $\bH^{\tau( A_{\bH})}$  are related by mutation and isomorphism, which, by Lemma~\ref{mkdjk} happens if and only if $\G$ and  $\bH$  are related by mutation and isomorphism.
\end{proof}

\begin{lemma}\label{dsad}
Let $\G$ and $\bH$ be connected ribbon graphs. Then  $D(\G)\iso D(\bH)$ if and only if $\G$ and $\bH$ are related by mutation and isomorphism. 
\end{lemma}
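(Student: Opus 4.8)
The plan is to reduce the case of connected ribbon graphs to the case of bouquets, which is already handled by Lemma~\ref{l1}. The key tool for doing this is partial duality: for a connected ribbon graph $\G$ with spanning quasi-tree whose edge set is $A$, the partial dual $\G^A$ is a bouquet, and by Item~\ref{dan3} of Theorem~\ref{dan} we have $D(\G^A) = D(\G)\ast A$. So first I would fix a spanning quasi-tree of $\G$; since feasible sets of $D(\G)$ are exactly the edge sets of spanning quasi-trees of $\G$ (as $\G$ is connected), such an $A$ exists and is a feasible set of $D(\G)$. Then $D(\G)\ast A$ is normal, and $\G^A$ is a bouquet.

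Now suppose $D(\G)\iso D(\bH)$ via a bijection $\phi$. Since $\phi$ carries feasible sets to feasible sets, $\phi(A)$ is a feasible set of $D(\bH)$, hence is the edge set of a spanning quasi-tree of $\bH$, so $\bH^{\phi(A)}$ is a bouquet. Using Item~\ref{dan3} twice and the compatibility of twisting with isomorphism, $D(\G^A) = D(\G)\ast A \iso D(\bH)\ast \phi(A) = D(\bH^{\phi(A)})$. By Lemma~\ref{l1}, the bouquets $\G^A$ and $\bH^{\phi(A)}$ are related by mutation and isomorphism. I then pull this back through partial duality: by Lemma~\ref{mkdjk}(2), mutation commutes with partial duality, and partial duality is involutory (forming the partial dual of $\G^A$ with respect to $A$ returns $\G$), so $\G = (\G^A)^A$ and $\bH = (\bH^{\phi(A)})^{\phi(A)}$ are related by mutation and isomorphism. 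One subtlety to check carefully is that the isomorphism relating the two bouquets need not respect the distinguished edge sets $A$ and $\phi(A)$; but an isomorphism of bouquets induces an isomorphism of the underlying edge sets, and we may transport the choice of quasi-tree along it, so that after composing with this isomorphism we are genuinely undoing partial duality with respect to corresponding edge sets. This is the step I expect to require the most care.

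The converse is the easy direction. If $\G$ and $\bH$ are related by mutation and isomorphism, I argue that $D(\G)\iso D(\bH)$. Isomorphism of ribbon graphs clearly induces isomorphism of delta-matroids. For mutation, I would show directly that mutation preserves the delta-matroid: mutation is a local operation that alters how boundary components of ribbon subgraphs are routed through a small region, and one checks that for every $A\subseteq E$ the quantity $b(A)$ is unchanged (equivalently, using arrow presentations as in the proof of Lemma~\ref{mkdjk}, the number of boundary components of $(V,A)$ is invariant under the replacements shown in Figure~\ref{anjda}). Hence $\F=\{A : b(A)=k(E)\}$ is unchanged and $D(\G)=D(\bH)$. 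Alternatively, and perhaps more cleanly, one can invoke Lemma~\ref{l1} in the other direction together with the partial-duality reduction above: passing to bouquets via $\G \mapsto \G^A$, mutation of $\G$ and $\bH$ becomes mutation of the associated bouquets (Lemma~\ref{mkdjk}(2)), which by Lemma~\ref{l1} gives $D(\G^A)\iso D(\bH^{A})$, and twisting back by $A$ yields $D(\G)\iso D(\bH)$.
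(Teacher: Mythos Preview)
Your proposal is correct and follows essentially the same route as the paper's own proof: reduce to bouquets by taking the partial dual with respect to the edge set of a spanning quasi-tree (using Item~\ref{dan3} of Theorem~\ref{dan}), invoke Lemma~\ref{l1} on the resulting bouquets, and then transfer the conclusion back through partial duality via Lemma~\ref{mkdjk}. The paper compresses both directions into a single chain of ``if and only if'' statements, while you separate the two directions and are more explicit about the subtlety that the ribbon-graph isomorphism produced by Lemma~\ref{l1} need not \emph{a priori} carry $A$ to $\phi(A)$; the paper glosses over this point with the phrase ``corresponding edge sets''.
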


\begin{proof}
If  $D(\G)\iso D(\bH)$, then any feasible set $X_{\G}$ of $D(\G)$ is identified with a feasible set  $X_{\bH}$  of $ D(\bH)$.
Then $D(\G)\iso D(\bH)$ if and only if $D(\G)\ast X_{\G} \iso D(\bH)\ast X_{\bH}$, for corresponding  feasible sets   $X_{\G}$ and $X_{\bH}$.
By Item~\ref{dan3} of Theorem~\ref{dan}, this holds if and only if $D(\G^{X_{\G}}) \iso D(\bH^{X_{\bH}})$, where $X_{\G}$ and $X_{\bH}$ are edge sets of corresponding quasi-trees of $\G$ and $\bH$. As both $\G^{X_{\G}}$ and  $\bH^{X_{\bH}}$ are 1-vertex ribbon graphs (as we have formed the partial dual with respect to a quasi-tree), by Lemma~\ref{l1}, this happens if and only if $\G^{X_{\G}}$ and $\bH^{X_{\bH}}$ are related by isomorphism and mutation. 
By Lemma~\ref{mkdjk}, $\G^{X_{\G}}$ and $\bH^{X_{\bH}}$ are related by isomorphism and mutation if and only if $\G$ and $\bH$ are related by isomorphism and mutation, where $X_{\G}$ and  $X_{\bH}$ are corresponding edge sets. 
\end{proof}

The main theorem readily follows.
\begin{proof}[Proof of Theorem~\ref{t1}]
It is not hard to see that vertex joins and vertex cuts do not change the delta-matroid of a ribbon graph. The delta-matroid of a disconnected ribbon graph is the direct sum of the delta-matroids of its components, and so it follows from Lemma~\ref{dsad} that mutation does not change the delta-matroid. 

Conversely, if $D(\G)\iso D(\bH)$, by using vertex joins, we can obtain connected ribbon graphs $\G'$ and $\bH'$  such that $D(\G)\iso D(\G')$ and $D(\bH)\iso D(\bH')$. By Lemma~\ref{dsad} it follows that $\G'$ and $\bH'$ are related by mutation and isomorphism, and so $\G$ and $\bH$ are related by vertex joins,  mutation, isomorphism, and vertex cuts.
\end{proof}

We conclude by noting  three corollaries to Theorem~\ref{t1}.  The first is the observation that the planar case of Whitney's 2-Isomorphism Theorem follows from it. This is since the delta-matroid of a plane ribbon graph is its cycle matroid.
\begin{corollary}
Let $G$ and $H$ planar graphs.
 Then $C(G)$ and  $C(H)$ are isomorphic  if and only if $G$ and $H$ are related by isomorphism, vertex identification, vertex cleaving, or Whitney twisting.
 \end{corollary}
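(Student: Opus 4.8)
The plan is to derive this corollary directly from Theorem~\ref{t1} by moving between planar graphs and plane ribbon graphs, using the fact that for a plane ribbon graph the delta-matroid coincides with the cycle matroid. First I would recall that every planar graph $G$ has a (not necessarily unique) cellular embedding in a disjoint union of spheres, hence can be realised as a plane ribbon graph $\G$; and conversely every plane ribbon graph $\G$ gives back a planar graph $G$ by capping boundary components. The key identity to invoke is that when $\G$ is plane, the plane requirement in the definition of $C(\G)$ is automatically satisfied by every feasible set of $D(\G)$, so $D(\G)=C(\G)=C(G)$ as set systems on the common ground set $E$. Thus $C(G)\iso C(H)$ becomes $D(\G)\iso D(\bH)$ for plane ribbon graph representatives $\G$ of $G$ and $\bH$ of $H$.

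Next I would apply Theorem~\ref{t1}: $D(\G)\iso D(\bH)$ if and only if $\G$ and $\bH$ are related by ribbon graph isomorphism, vertex joins, vertex cuts, and mutation. The task is then purely translational: show that, when restricted to plane ribbon graphs, each of these four ribbon-graph moves corresponds to one of the four graph moves in Whitney's statement. Ribbon graph isomorphism of plane ribbon graphs corresponds to graph isomorphism (possibly after reflection of components, which does not affect the underlying abstract graph). A vertex join along an arc $\alpha$ is exactly vertex identification of two graphs at a single vertex, and a vertex cut is vertex cleaving; since these are performed along an arc lying entirely on a vertex boundary, the resulting ribbon graph is again plane, so we stay within the planar world. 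For mutation, I would argue that on a plane ribbon graph a mutation move is precisely a Whitney twist: the two arcs $\alpha_i,\beta_i$ used in Definition~\ref{sas} pick out a $2$-vertex-cut of the underlying graph splitting it into two pieces, and the four reglueings in Definition~\ref{sas} (reversing consistency and/or swapping $\alpha$ with $\beta$) realise exactly the flip of one piece across the $2$-separation, which is the classical Whitney twist.

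The one point requiring a little care is verifying that these ribbon-graph moves, applied to plane ribbon graphs, never leave the class of plane ribbon graphs, and conversely that every application of a Whitney move to $G$ is realised by the corresponding ribbon-graph move on some plane representative. This is where I expect the main (though modest) obstacle to lie: one must check that mutation of a plane ribbon graph yields a plane ribbon graph, i.e. that a Whitney twist does not change planarity --- which is classical, since a Whitney twist does not change the cycle matroid and planarity of a graph is a matroid property (Whitney's planarity criterion) --- and one must ensure the correspondence between ``2-vertex-cuts of $\G$ realised by arcs'' and ``2-separations of $G$'' is a bijection up to the ambiguity already present in choosing a plane embedding. Since all of these are either classical facts about planar graphs or direct unwindings of the definitions, the corollary follows. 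Finally, assembling the two directions: $C(G)\iso C(H)$ $\iff$ $D(\G)\iso D(\bH)$ $\iff$ $\G,\bH$ related by the four ribbon moves $\iff$ (restricting to plane ribbon graphs) $G,H$ related by isomorphism, vertex identification, vertex cleaving, or Whitney twisting, which is the claim.
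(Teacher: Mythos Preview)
Your approach matches the paper's one-sentence justification: derive the corollary from Theorem~\ref{t1} via the identity $D(\G)=C(\G)$ for plane ribbon graphs. Your expansion of the translation step is a useful elaboration, but it contains a gap.

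To keep the sequence of ribbon moves inside the class of plane ribbon graphs, you argue that ``a Whitney twist does not change planarity'' via Whitney's planarity criterion. This only shows that the \emph{underlying abstract graph} of the mutant $\bH$ is planar; it does not show that $\bH$ itself is a plane ribbon graph --- a planar graph admits cellular embeddings of arbitrarily high genus. The correct argument is intrinsic to delta-matroids: each ribbon move preserves $D$ (this is the easy direction of Theorem~\ref{t1}), and a connected ribbon graph $\G$ is plane if and only if $D(\G)$ is a matroid (if $\G$ is not plane it has both a spanning tree and a higher-genus spanning quasi-tree, so $D(\G)$ has feasible sets of different sizes). Hence every intermediate ribbon graph stays plane, and your translation then goes through. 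Two smaller points: mutation of type~(1) in Definition~\ref{sas}, and any mutation where $\alpha_2,\beta_2$ lie on a common vertex of $\G_2$, leave the underlying graph unchanged, so ``mutation is precisely a Whitney twist'' should read ``mutation induces either a re-embedding or a Whitney twist on the underlying graph''; and you do not need the converse you worry about (that every Whitney twist of $G$ is realised by a mutation on some plane representative), since the elementary implication ``Whitney moves $\Rightarrow$ $C(G)\cong C(H)$'' already closes the chain of equivalences.
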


For the second corollary, two graphs are said to be \emph{Tutte equivalent} if they have the same Tutte polynomials.  A useful result in the area of graph polynomials is that the Tutte polynomial is invariant under Whitney flips. This follows from  Whitney's 2-Isomorphism Theorem since the Tutte polynomial of a graph is determined by its cycle matroid. Recently, much attention has been paid to versions of the Tutte polynomial for graphs in surfaces. Many of these topological graph polynomials are determined by delta-matroids (see \cite{CMNR2,CMNR1,KMT,MS,MR3361421}) and so are invariant under mutation by  Theorem~\ref{t1}. 
\begin{corollary}\label{c3}
Connected mutant ribbon graphs have the same 
Bollob\'as-Riordan,  Krushkal, Las~Vergnas, Penrose, ribbon graph and topological transition polynomials.  
\end{corollary}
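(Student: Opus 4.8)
The plan is to factor the corollary through delta-matroid isomorphism. Since $\G$ and $\bH$ are connected mutants, Lemma~\ref{dsad} gives $D(\G)\iso D(\bH)$, so it suffices to show that each of the listed polynomials takes equal values on any two ribbon graphs with isomorphic delta-matroids. Once that is in hand, the corollary is immediate.

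To establish this I would assemble, for each polynomial $P$ in the list, a polynomial $\widehat{P}$ of delta-matroids that is invariant under delta-matroid isomorphism and satisfies $\widehat{P}(D(\G))=P(\G)$ for every connected ribbon graph $\G$; then $D(\G)\iso D(\bH)$ forces $P(\G)=\widehat{P}(D(\G))=\widehat{P}(D(\bH))=P(\bH)$. The topological transition polynomial is, essentially by construction, such an invariant of $D(\G)$ via the transition-polynomial formalism of~\cite{CMNR1,MR3361421}, and the Penrose and Las~Vergnas polynomials are recovered from it by fixed substitutions, so these three cases are handled by~\cite{MR3361421}. For the Krushkal polynomial one appeals to~\cite{KMT}: its defining sum over spanning ribbon subgraphs $(V,A)$ uses only $|A|$ together with the rank and genus data of $(V,A)$, and these are determined by $D(\G)$ once the relevant global constant ($|V|$, equivalently $k(E)=1$ for connected $\G$) is pinned down. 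The Bollob\'as--Riordan and the ribbon graph polynomials then arise as specialisations of the Krushkal polynomial (equivalently, directly from~\cite{MS,CMNR2}), so they factor through $D(\G)$ as well. Chaining these identifications across the list completes the argument.

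The step I expect to require actual work --- everything else being bookkeeping with definitions --- is verifying that the Bollob\'as--Riordan polynomial (and with it the Krushkal polynomial) genuinely factors through the delta-matroid. Its exponents record, for each $A\subseteq E$, the corank, nullity and Euler genus of the spanning ribbon subgraph $(V,A)$, and this data is not visible from a single feasible set of $D(\G)$; one must argue that, for a \emph{connected} ribbon graph, the needed invariants of \emph{all} spanning ribbon subgraphs are jointly recoverable from $D(\G)$ --- for instance by twisting $D(\G)$ to the normal delta-matroid of a bouquet and exploiting Item~\ref{dan3} of Theorem~\ref{dan}. This is precisely the point at which the connectedness hypothesis of the corollary is used, and it is the one ingredient I would spell out in detail rather than merely cite.
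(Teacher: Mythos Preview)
Your proposal is correct and follows the same approach as the paper: the paper's argument is just the sentence preceding the corollary, namely that each listed polynomial is determined by the delta-matroid of the ribbon graph (citing \cite{CMNR2,CMNR1,KMT,MS,MR3361421}), so Theorem~\ref{t1} (equivalently Lemma~\ref{dsad} for connected ribbon graphs) gives the result. You have simply unpacked which reference handles which polynomial and been more explicit about where connectedness enters, but the logical structure is identical.
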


The \emph{Jones polynomial} of a knot or link is a specialization of the Bollob\'as--Riordan polynomial of all-A ribbon graph obtained from one of its diagrams~\cite{DFKLS}. 
Vertex joins and mutation on ribbon graphs corresponds to connected sums and mutation of link diagrams. Hence,  by Corollary~\ref{c3}, we have the following result. 

\begin{corollary}\label{c4}
The Jones polynomial $V_K(t)$ of a knot $K$ satisfies the following properties.
\begin{enumerate}
    \item  $V_{K_1\#K_2}(t)=V_{K_1}(t)V_{K_2}(t)$, where $K_1\#K_2$ is a connected sum of knots $K_1$ and $K_2$.
    \item $V_K(t)=V_{K'}(t)$, where $K'$ is a mutant of $K$.
\end{enumerate}
\end{corollary}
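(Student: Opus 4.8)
The plan is to route everything through the all-A ribbon graph $\G_D$ associated to a link diagram $D$, using the result of~\cite{DFKLS} that $V_K(t)$ is, up to a normalising factor that depends only on the writhe of $D$ and the number of link components, a specialisation of the Bollob\'as--Riordan polynomial of $\G_D$. Since neither connected sum nor mutation of knots changes the writhe or the number of components, it suffices in each part to exhibit a diagram-level version of the operation that is mirrored by a ribbon-graph operation under which the Bollob\'as--Riordan polynomial behaves in the required way --- multiplicatively for part~(1), and invariantly for part~(2) --- and then to substitute into the~\cite{DFKLS} specialisation.

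For part~(1), given diagrams $D_1$ of $K_1$ and $D_2$ of $K_2$, I would take the standard connected-sum diagram $D$ of $K_1\#K_2$ obtained by splicing $D_1$ and $D_2$ along a short arc that avoids all crossings. Forming the all-A smoothing commutes with this splicing, so $\G_D$ is the vertex join of $\G_{D_1}$ and $\G_{D_2}$. I would then invoke the standard multiplicativity of the Bollob\'as--Riordan polynomial under vertex joins --- which holds because the spanning quasi-trees of a vertex join are precisely the unions of spanning quasi-trees of the two pieces, and the rank, nullity and genus contributions add --- and feed this through the specialisation, noting that the normalising factor for $K_1\#K_2$ is the product of those for $K_1$ and $K_2$. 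This gives $V_{K_1\#K_2}(t)=V_{K_1}(t)V_{K_2}(t)$.

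For part~(2), I would begin with a diagram $D$ of $K$ that displays the mutation: $D$ contains a $2$-tangle inside a disc meeting $D$ in four points, and the mutant diagram $D'$ of $K'$ is obtained by rotating this tangle through $\pi$ about one of the three coordinate axes. The crux is a local check: after taking the all-A smoothing, the four points where the tangle meets the rest of $D$ organise into two disjoint arcs on vertex boundaries of $\G_D$, and the three tangle rotations carry $\G_D$ to exactly the three ribbon graphs depicted in Figure~\ref{padon}. Thus $\G_D$ and $\G_{D'}$ are mutant ribbon graphs in the sense of Definition~\ref{sas}, and Corollary~\ref{c3} gives that they have the same Bollob\'as--Riordan polynomial; since mutation preserves the writhe and the number of components, the normalising factors agree, and the~\cite{DFKLS} specialisation yields $V_K(t)=V_{K'}(t)$.

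I expect the main obstacle to be precisely this last local check in part~(2): one must verify that the all-A state of the tangle boundary really produces the pair of directed arcs $\alpha_i,\beta_i$ of Definition~\ref{sas}, that the three tangle rotations induce exactly the three prescribed identifications (consistent versus inconsistent, swapped versus unswapped), and that no interlacing with the remainder of the diagram is introduced. This is a finite diagrammatic verification, but it is where the slogan ``knot mutation corresponds to ribbon-graph mutation'' must be made precise; once it is established, both statements follow formally from Corollary~\ref{c3} together with the multiplicativity of the Bollob\'as--Riordan polynomial under vertex joins.
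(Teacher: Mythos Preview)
Your proposal is correct and follows essentially the same route as the paper: reduce the Jones polynomial to the Bollob\'as--Riordan polynomial of the all-$A$ ribbon graph via~\cite{DFKLS}, observe that connected sum of diagrams corresponds to vertex join and knot mutation to ribbon-graph mutation, and then invoke Corollary~\ref{c3} (together with multiplicativity of the Bollob\'as--Riordan polynomial under vertex joins). The paper states this in a single sentence before the corollary; your version simply makes explicit the normalising-factor bookkeeping and the local tangle/arrow-presentation check that the paper leaves implicit.
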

 This result is a standard fact in knot theory. What is notable here is that for alternating links the result can be deduced from Whitney's Theorem and that the Jones polynomial for alternating links can be recovered from the Tutte polynomial. The above argument is the extension of this classical approach to the case of non-alternating links.


\begin{thebibliography}{10}

\bibitem{MR904585}
A.~Bouchet,
\newblock Greedy algorithm and symmetric matroids,
\newblock Math. Programming, 38  (1987) 147--159.

\bibitem{MR918395}
A.~Bouchet,
\newblock Reducing prime graphs and recognizing circle graphs,
\newblock Combinatorica, 7  (1987) 243--254.

\bibitem{ab88}
A.~Bouchet,
\newblock Representability of {$\triangle$}-matroids,
\newblock In Combinatorics ({E}ger, 1987), volume~52 of Colloq.
  Math. Soc. J{\'a}nos Bolyai, pages 167--182. North-Holland, Amsterdam, 1988.

\bibitem{ab2}
A.~Bouchet,
\newblock Maps and {$\triangle$}-matroids,
\newblock Discrete Math., 78  (1989) 59--71.

\bibitem{BD91}
A.~Bouchet, A.~Duchamp,
\newblock Representability of {$\triangle$}-matroids over {${\rm GF}(2)$},
\newblock Linear Algebra Appl., 146 (1991) 67--78.

\bibitem{MR2838021}
R.~Brijder, H.~J. Hoogeboom,
\newblock The group structure of pivot and loop complementation on graphs and
  set systems,
\newblock European J. Combin., 32  (2011) 1353--1367.

\bibitem{MR959006}
R.~Chandrasekaran, S.~N. Kabadi,
\newblock Pseudomatroids,
\newblock Discrete Math., 71  (1988) 205--217.

\bibitem{Ch09}
S.~Chmutov,
\newblock Generalized duality for graphs on surfaces and the signed
  {B}ollob{\'a}s-{R}iordan polynomial,
\newblock J. Combin. Theory Ser. B, 99  (2009) 617--638.

\bibitem{CL}
S.~Chmutov, S.~Lando,
\newblock Mutant knots and intersection graphs,
\newblock Algebr. Geom. Topol., 7  (2007) 1579--1598.

\bibitem{CMNR2}
C.~Chun, I.~Moffatt, S.~D. Noble, R.~Rueckriemen,
\newblock On the interplay between embedded graphs and delta-matroids,
\newblock Proc. London Math. Soc., 18 (2019) 675--700.

\bibitem{CMNR1}
C.~Chun, I.~Moffatt, S.~D. Noble, R.~Rueckriemen,
\newblock Matroids, delta-matroids and embedded graphs,
\newblock J. Combin. Theory Ser. A, 67 (2019) 7--59.

\bibitem{MR2437322}
B.~Courcelle,
\newblock Circle graphs and monadic second-order logic,
\newblock J. Appl. Log., 6  (2008) 416--442.

\bibitem{MR655562}
W.~H. Cunningham,
\newblock Decomposition of directed graphs,
\newblock SIAM J. Algebraic Discrete Methods, 3  (1982) 214--228.

\bibitem{DFKLS}
O.~T. Dashbach, D.~Futer, E.~Kalfagianni, X.-S. Lin, N.~W. Stoltzfus,
\newblock The Jones polynomial and graphs on surfaces,
\newblock J. Combin. Theory Ser. B, 98  (2008) 384--399. 

\bibitem{MR866162}
A.~Dress, T.~F. Havel,
\newblock Some combinatorial properties of discriminants in metric vector
  spaces,
\newblock Adv. in Math., 62  (1986) 285--312.

\bibitem{MR2869185}
J.~A. Ellis-Monaghan, I.~Moffatt,
\newblock Twisted duality for embedded graphs,
\newblock Trans. Amer. Math. Soc., 364  (2012) 1529--1569.

\bibitem{MR3086663}
J.~A. Ellis-Monaghan, I.~Moffatt,
\newblock Graphs on surfaces,
\newblock Springer, New York, 2013.


\bibitem{MR1072233}
C.~P. Gabor, K.~J. Supowit, W.~L. Hsu,
\newblock Recognizing circle graphs in polynomial time,
\newblock J. Assoc. Comput. Mach., 36  (1989) 435--473.

\bibitem{GT87}
J.~L. Gross, T.~W. Tucker,
\newblock Topological graph theory,
\newblock Dover Publications, Inc., Mineola, NY, 2001.

\bibitem{KMT}
T.~Krajewski, I.~Moffatt, A.~Tanasa,
\newblock {H}opf algebras and {T}utte polynomials,
\newblock Adv. in Appl. Math., 95 (2018) 271--330.

\bibitem{bcc}
I.~Moffatt,
\newblock Delta-matroids for graph theorists,
\newblock In Surveys in combinatorics, 2019, volume XXtbcXX of London
  Math. Soc. Lecture Note Ser., page XXtbcXX. Cambridge Univ. Press, Cambridge,
  2019.

\bibitem{MS}
I.~Moffatt, B.~Smith,
\newblock Matroidal frameworks for topological tutte polynomials,
\newblock J. Combin. Theory Ser. B, 133 (2018)  1--31.

\bibitem{Oxley}
J.~Oxley,
\newblock Matroid theory,
\newblock Oxford Science Publications,  Oxford University
  Press, New York, 1992.

\bibitem{MR1850709}
J.~Oxley,
\newblock On the interplay between graphs and matroids,
\newblock In Surveys in combinatorics, 2001 ({S}ussex), volume 288 of
  London Math. Soc. Lecture Note Ser., pages 199--239. Cambridge Univ.
  Press, Cambridge, 2001.

\bibitem{MR3361421}
L.~Traldi,
\newblock The transition matroid of a 4-regular graph: an introduction,
\newblock European J. Combin., 50 (2015) 180--207.

\bibitem{MR558452}
K.~Truemper,
\newblock On {W}hitney's {$2$}-isomorphism theorem for graphs,
\newblock J. Graph Theory, 4  (1980) 43--49.

\bibitem{MR816055}
D.~K. Wagner,
\newblock On theorems of {W}hitney and {T}utte,
\newblock Discrete Math., 57  (1985) 147--154.

\bibitem{whitney}
H.~Whitney,
\newblock 2-isomorphic graphs,
\newblock Amer. J. Math., 55 (1933) 245--254, .

\end{thebibliography}
\end{document}